\definecolor{mred}{rgb}{0.7,0,0}
\definecolor{mblue}{RGB}{034,113,179}
\numberwithin{equation}{section}
\newtheorem{Theorem}{Theorem}[section]
\newtheorem{Lemma}[Theorem]{Lemma}
\newtheorem{Corollary}[Theorem]{Corollary}
\newtheorem{Proposition}[Theorem]{Proposition}
\theoremstyle{definition}
\newtheorem{Definition}[Theorem]{Definition}
\theoremstyle{remark}
\newtheorem{Remark}[Theorem]{Remark}
\newtheorem{Example}[Theorem]{Example}
\def \dim{{\mbox {dim}}\,}
\def\R{\mathbb{R}}
\def \la{\lambda}
\def \re{{\mathbb R}}
\def \C{{\mathbb C}}
\def \ov{\overline}
\def \0{\lambda_{0}}
\def \la{\lambda}
\def \ga{\gamma}
\def\d{d}
\begin{document}
\title[Differentials, Thermostats and Anosov flows]{Holomorphic Differentials, Thermostats and Anosov flows}
\author[T.~Mettler]{Thomas Mettler}
\address{Institut f\"ur Mathematik, Goethe-Universit\"at Frankfurt, 60325 Frankfurt am Main, Germany}
\email{mettler@math.uni-frankfurt.de}
\author[G.P.~Paternain]{Gabriel P.~Paternain}
\address{ Department of Pure Mathematics and Mathematical Statistics,
University of Cambridge,
Cambridge CB3 0WB, England}
\email {g.p.paternain@dpmms.cam.ac.uk}

\date{May 18, 2018}

\begin{abstract} We introduce a new family of thermostat flows on the unit tangent bundle of an oriented Riemannian $2$-manifold. Suitably reparametrised, these flows include the geodesic flow of  metrics of negative Gauss curvature and the geodesic flow induced by the Hilbert metric on the quotient surface of divisible convex sets. We show that the family of flows can be parametrised in terms of certain weighted holomorphic differentials and investigate their properties. In particular, we prove that they admit a dominated splitting and we identify special cases in which the flows are Anosov. In the latter case, we study when they admit an invariant measure in the Lebesgue class and the regularity of the weak foliations.
\end{abstract}

\maketitle

\section{Introduction}

We introduce a new family of flows on the unit tangent bundle $SM$ of a closed oriented Riemannian $2$-manifold $(M,g)$ of negative Euler characteristic. The flows are (generalised) thermostat flows and are generated by $C^{\infty}$ vector fields of the form $F:=X+(a-V\theta)V$, where $X,V$ denote the geodesic and vertical vector fields on $SM$, $\theta$ is a $1$-form on $M$ -- thought of as a real-valued function on $SM$ -- and $a$ represents a differential $A$ of degree $m\geqslant 2$ on $M$. The triple $(g,A,\theta)$ determining the flow is subject to the equations
\begin{equation}\label{eq:maineqs}
K_g=-1+\delta_g\theta+(m-1)|A|^2_g\quad \text{and}\quad \ov{\partial}A=\left(\frac{m-1}{2}\right)\left(\theta-i\star_g\theta\right)\otimes A,
\end{equation}
where $i=\sqrt{-1}$ and where $K_g$ denotes the Gauss--curvature, $\delta_g$ the co-differential and $\star_g$ the Hodge-star with respect to $g$ and the orientation. The case $m=3$ of these equations appeared previously in ~\cite{Met} (assuming $\theta$ is closed), where it is related to certain torsion-free connections on $TM$ which admit an interpretation as Lagrangian minimal surfaces. Here we prove that our flows admit a dominated splitting and moreover, that this family of flows admits a parametrisation in terms of holomorphic data. Indeed, we show that a triple $(g,A,\theta)$ satisfying the equations~\eqref{eq:maineqs} determines a holomorphic line bundle structure on the smooth complex line bundle $L_m:=\Lambda^2(TM)^{(m-1)/2}\otimes \C$, so that the ``weighted differential'' $P=\left(\det g\right)^{-(m-1)/4}\otimes A$ is a holomorphic section of $L_m\otimes K_M^m$ and such that a certain negative curvature condition holds. Here $K_M$ denotes the canonical bundle of $(M,g)$. Conversely, given a closed hyperbolic Riemann surface $(M,[g])$, a holomorphic line bundle structure on $L_m$ and a holomorphic section $P$ of $L_m\otimes K^m_{M}$ satisfying a certain negative curvature condition, we construct a triple $(g,A,\theta)$ solving~\eqref{eq:maineqs} and hence one of our flows, by using the uniformisation theorem and by solving an algebraic equation only. 

In~\cite{W2}, Wojtkowski introduced W-flows by suitably reparametrising the geodesics of a Weyl connection (or conformal connection). We show that the case where $A$ vanishes identically corresponds to W-flows associated to conformal connections on the tangent bundle of a surface that have negative definite symmetrised Ricci curvature. In particular, we recover~\cite[Theorem 5.2]{W2}, by showing that the flow associated to a triple $(g,0,\theta)$ solving~\eqref{eq:maineqs} is Anosov. This is achieved by providing sufficiency conditions for a general thermostat flow to admit a dominated splitting and to have the Anosov property, see Proposition~\ref{prop:basic} and Theorem~\ref{thm:workhorse}.   

We then turn to the case where $\theta$ vanishes identically, so that $A$ is holomorphic, hence we have
\begin{equation}\label{eq:maineqshol}
K_g=-1+(m-1)|A|^2_g \quad \text{and} \quad \ov{\partial} A=0.
\end{equation}
Note that applying standard quasi-linear elliptic PDE techniques we obtain a unique solution $g$ to~\eqref{eq:maineqshol} for every holomorphic differential $A$ on $(M,[g])$, see Remark~\ref{rmk:exsolpde}. The equations~\eqref{eq:maineqshol} admit an interpretation as~\textit{coupled vortex equations}, see in particular~\cite[\S 5]{DM}. The case $m=2$ was considered in~\cite{P07} in the context of Anosov thermostats admitting smooth weak bundles (see Section \ref{section:2-3} for more details). In the case $m=3$, the first equation is known as Wang's equation in the affine sphere literature. In~\cite{Wang}, Wang related its solution to a complete hyperbolic affine $2$-sphere in $\R^3$, in particular $g$  is known as the~\textit{Blaschke metric}. Moreover, for $m=3$, a pair $(g,A)$ on $M$ solving~\eqref{eq:maineqshol} defines a ~\textit{properly convex projective structure} on $M$ and hence turns $M$ into a properly convex projective surface, see~\cite{Lab} and~\cite{Loftin}. The universal cover $\Omega$ of a properly convex projective surface of negative Euler characteristic is a strictly and properly convex domain in the projective plane $\mathbb{RP}^2$ which admits a cocompact action by a group $\Gamma$ of projective transformations. Consequently, we obtain a (two-dimensional)~\textit{divisible convex set}. Since $\Omega$ is convex, it is equipped with the Hilbert metric and moreover, the Hilbert metric descends to define a Finsler metric on the quotient surface $M\simeq \Omega/\Gamma$, see in particular~\cite{KimPap} for a nice survey of these ideas. We observe that the geodesic flow of the Finsler metric is a $C^1$ reparametrisation of the flow we associate to the pair $(g,A)$. Benoist has shown~\cite{BConvDivI} that if $(\Omega,\Gamma)$ is a divisible convex set (not necessarily two-dimensional), then the geodesic flow of the Finsler metric $F$ induced on $\Omega/\Gamma$ -- henceforth just called the Hilbert geodesic flow -- is Anosov if and only if $\Omega$ is strictly convex. Since the Anosov property is invariant under reparametrisation, we may ask if the thermostat flow associated to a pair $(g,A)$ solving~\eqref{eq:maineqshol} is Anosov for all $m\geqslant 2$. This is indeed the case, we obtain:
\setcounter{section}{5}
\begin{Theorem} Let $(g,A)$ be a pair satisfying the coupled vortex equations $\bar{\partial}A=0$ and
$K_g=-1+(m-1)|A|^{2}_{g}$. Then the associated thermostat flow is Anosov.
\end{Theorem}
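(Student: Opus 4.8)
The plan is to deduce the theorem from the general sufficiency criterion of Theorem~\ref{thm:workhorse}, by verifying its hypotheses for the flow generated by $F=X+aV$, which is the shape the thermostat takes when $\theta\equiv 0$ and $a$ is the function on $SM$ induced by the differential $A$. First I would fix the standard moving frame: writing $H:=[V,X]$ for the horizontal field, one has the structure equations $[V,H]=-X$ and $[X,H]=K_gV$, and I would seek the weak (un)stable line fields in the form $\{F,\ H+rV\}$. Imposing invariance under $F$ produces a Riccati-type equation $Fr=-r^{2}+(Va)\,r-\mathcal K$ for the slope $r$, with $\mathcal K=K_g-Ha+a^{2}$; completing the square via $r=w+\tfrac12 Va$ removes the first-order term and yields $Fw=-w^{2}-\mathbb K$ with effective curvature
\[
\mathbb K \;=\; K_g-Ha+a^{2}+\tfrac12\,F(Va)-\tfrac14\,(Va)^{2}.
\]
Theorem~\ref{thm:workhorse} then reduces the Anosov property to a hyperbolicity condition on $\mathbb K$ along the flow, while Proposition~\ref{prop:basic} already supplies the two invariant line fields.

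The decisive input is the holomorphicity $\bar\partial A=0$, which I would exploit through the Fourier grading of functions on $SM$ by the circle action of $V$. Since $A$ has degree $m$, the function $a$ lives in the modes $\pm m$, so $Va$ is a pure rotation with $V^{2}a=-m^{2}a$. Writing $a=u_m+u_{-m}$ with $u_{-m}=\overline{u_m}$ and using the Guillemin--Kazhdan operators $\eta_{\pm}=\tfrac12(X\pm iH)$, which raise and lower the degree by one, holomorphicity is precisely the statement that the lowering operator annihilates the top mode, $\eta_-u_m=0$. This collapses all first derivatives of $a$ onto the single quantity $\eta_+u_m\in\Omega_{m+1}$: one gets $Xa=2\,\mathrm{Re}(\eta_+u_m)$, $Ha=2\,\mathrm{Im}(\eta_+u_m)$, and hence the identity $V(Xa)=-(m+1)\,Ha$. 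A short computation then expresses every derivative appearing in $\mathbb K$ in terms of the undifferentiated quantities $Ha$, $a^{2}$ and $|u_m|^{2}$.

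At this point I would substitute the coupled vortex equation \eqref{eq:maineqshol}, $K_g=-1+(m-1)|A|^{2}$, and collect Fourier modes. The degree-zero (fibrewise constant) part of $\mathbb K$ then equals $-1$ plus a nonpositive multiple of $|A|^{2}$, the multiple being $\le 0$ for every $m\ge 2$ and, in the natural normalisation, vanishing exactly at $m=2$ (consistent with the smooth weak bundles found in~\cite{P07}). In particular the fibre-average of the effective curvature is bounded above by $-1$. This is the mechanism by which the ``$-1$'' in the coupled vortex equation forces the background hyperbolicity, the holomorphic differential entering only as a controlled perturbation.

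The main obstacle is precisely what survives the fibre-average: $\mathbb K$ retains genuinely oscillating contributions (the $Ha$ term, of degree $m+1$, and the degree $\pm2m$ part of $a^{2}$), so $\mathbb K$ is \emph{not} pointwise negative once $A$ is large. The crux --- and the reason Theorem~\ref{thm:workhorse} must be formulated more robustly than a naive pointwise curvature bound --- is to show that these oscillating terms neither create conjugate points nor destroy the transversality of the two invariant line fields, so that $Fw=-w^{2}-\mathbb K$ still admits two distinct bounded solutions whose Lyapunov rates straddle zero. I would control them using the precise coupling constant $(m-1)$ together with the fact that the oscillating terms are built from $\eta_+u_m$, which is tied back to $|A|^{2}$ by holomorphicity. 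As a sanity check the construction must recover the hyperbolic geodesic flow when $A\equiv0$ and, for $m=3$, be compatible with Benoist's theorem on the Hilbert geodesic flow; one may also regard the family $t\mapsto(g_t,tA)$ of solutions of \eqref{eq:maineqshol} as a deformation from the hyperbolic case along which the dominated splitting of Proposition~\ref{prop:basic} persists, giving an alternative route for propagating the Anosov property.
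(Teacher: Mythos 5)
Your setup (Jacobi/Riccati framework, Fourier grading, Guillemin--Kazhdan operators) is sound, but the proof fails at exactly the step you call ``the crux'', and the two repairs you sketch do not work. First, your gauge $p=Va/2$ combined with fibre-averaging only shows that the zeroth Fourier mode of the effective curvature is $\leqslant -1$; Wojtkowski's criterion (Proposition~\ref{ppn:sufsplit}, Theorem~\ref{thm:workhorse}) is a \emph{pointwise} sign condition, and the oscillating modes of your $\mathbb{K}$ grow with $A$, so the average tells you nothing. The paper's resolution is a different gauge, $p=Va/m$: with $\theta=0$, holomorphicity gives the identity $XVa=mHa$ (Lemma~\ref{lem:transnotthomasgabriel}), and with this $p$ all the oscillatory first-derivative terms cancel \emph{identically}, yielding $\kappa_p\equiv-1$ exactly (Lemma~\ref{lem:wanggeneralized}) and hence the dominated splitting (Corollary~\ref{cor:domsplit}). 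Your alternative --- controlling the oscillating terms because ``$\eta_+u_m$ is tied back to $|A|^2$ by holomorphicity'' --- is false as a pointwise statement: $\bar\partial A=0$ gives no pointwise bound on the derivative $\eta_+u_m$ in terms of $|u_m|$, and no such bound is needed once the correct gauge kills those terms.

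Second, even granting $\kappa_p\equiv-1$, the Anosov half of Theorem~\ref{thm:workhorse} requires $\kappa_p+(V\lambda)^2/4<0$, i.e.\ $|Va|<2$ pointwise, which is not available: since $|Va|$ can be as large as $m|A|_g$ and one only ever obtains $|A|_g<1/\sqrt{m-1}$, this fails for $m\geqslant 3$. The paper therefore does \emph{not} upgrade via Theorem~\ref{thm:workhorse}; it verifies the first alternative of \eqref{eq:alternative} in Remark~\ref{rem:upgrade}, namely $r^u>0$ and $r^s<0$. The missing analytic ingredient, which your outline never produces, is Lemma~\ref{lemma:analogue_BH} ($-1\leqslant K_g<0$), proved by a Benoist--Hulin-type strong maximum principle applied to $f=2mu-\log(m-1)-\log|A|^2_{g_0}$; this yields the pointwise bound $(m-1)|A|^2_g<1$, which feeds a Riccati comparison: $h=r^u-Va/m$ satisfies $Fh+h^2+hB-1=0$ with $B=\frac{2-m}{m}Va$, $|B|<(m-2)/\sqrt{m-1}$, giving $r^u\geqslant \ell+Va/m>0$ with $\ell=\bigl(\sqrt{c^2+4}-c\bigr)/2$. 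Two further slips: Proposition~\ref{prop:basic} does not supply the invariant line fields here, since its hypothesis $\mathbb{K}<0$ fails once $A$ is large (the line fields come from the dominated splitting via \eqref{eq:construction2}); and the deformation $t\mapsto(g_t,tA)$ cannot propagate the Anosov property, because Anosov is open but not closed along such a path and dominated splitting alone does not imply hyperbolicity --- precisely the difficulty flagged in Remark~\ref{rem:upgrade} --- so persistence would require the same uniform estimates you are missing.
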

\setcounter{Theorem}{4}  

The hyperbolicity properties of thermostats satisfying \eqref{eq:maineqs} are not apparent. To expose them,
we first conjugate the derivative cocycle to another one in which we can see the effect of equations \eqref{eq:maineqs}.
This conjugation requires a careful choice of gauge, but once that is established, standard methods using quadratic forms give rise to a dominated splitting. To upgrade this dominated splitting to hyperbolic as in the case of Theorem \ref{thm:upgradeanosov} requires an additional ingredient in the form of Lemma \ref{lemma:analogue_BH} below which asserts that $K_{g}<0$;  this gives control on the potentially problematic size of $A$.  

In the same way as geodesic flows are paradigms of conservative systems, thermostats may be seen as paradigms of dissipative systems. The special case of Gaussian thermostats ($a=0$) has provided interesting models in nonequilibrium statistical mechanics \cite{Ga,GaRu,Ru1}.
The next theorem shows that Anosov thermostat flows determined by the coupled vortex equations are indeed dissipative except when $A=0$.

\begin{Theorem}Let $(g,A)$ be a pair satisfying the coupled vortex equations $\bar{\partial}A=0$ and
$K_g=-1+(m-1)|A|^{2}_{g}$.  Then the associated thermostat flow preserves an absolutely continuous measure if and only if $A$ vanishes identically.
\label{thm:entp}
\end{Theorem}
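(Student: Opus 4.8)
The plan is as follows. The ``if'' direction is immediate: when $A\equiv 0$ the first vortex equation reduces to $K_g=-1$, so $g$ is hyperbolic and $F=X$ is the geodesic vector field, whose flow preserves the Liouville measure $\mu$ of $SM$. For the converse I first record the relevant divergence. Writing $a$ for the real-valued function on $SM$ determined by $A$, a short computation with the structure equations of $\{X,H,V\}$ gives $\mathcal{L}_F\mu=(Va)\,\mu$, i.e. $\mathrm{div}_\mu F=Va$. Since $A$ is a differential of degree $m$, the function $a$ lies in the Fourier modes $\pm m$ of the $S^1$-action generated by $V$; consequently $Va=0$ already forces $A\equiv 0$ (this is the easy case where the invariant measure is Liouville itself), and more quantitatively $\int_{SM}(Va)^2\,d\mu$ is a positive multiple of $\int_M|A|^2_g$. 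Thus it suffices to show that $Va$ fails to be a coboundary whenever $A\not\equiv 0$.

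Next I pass to the cohomological equation. By the Anosov property established in the previous theorem, together with standard smooth ergodic theory, any invariant measure in the Lebesgue class has a smooth positive density $e^{u}$ relative to $\mu$ (smoothness of $u$ being the regularity of solutions of the transport equation for smooth Anosov flows), and its invariance is equivalent to $F u+\mathrm{div}_\mu F=0$, that is
\begin{equation*}
Xu+aVu+Va=0.
\end{equation*}
The task is therefore to prove that this equation admits a smooth solution $u$ only if $a\equiv 0$.

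To extract this I would test the equation against suitable functions and integrate. Substituting $Va=-(Xu+aVu)$ and integrating by parts in $X$ and $V$ gives
\begin{equation*}
\int_{SM}(Va)^2\,d\mu=\int_{SM}X(Va)\,u\,d\mu+\tfrac12\int_{SM}a^2\,V^2u\,d\mu.
\end{equation*}
Here the holomorphicity $\bar\partial A=0$ -- equivalently, the lowering operator $\eta_-$ annihilates the mode-$m$ component of $a$ -- collapses $X(Va)$ onto the modes $\pm(m+1)$, while $a^2V^2u$ meets only the mode $\pm 2m$ of $u$. The remaining input is a Pestov-type energy identity for $u$, in which the term $\int_{SM}(-K_g)(Vu)^2\,d\mu$ appears with a favorable sign precisely because $K_g<0$ by Lemma~\ref{lemma:analogue_BH}; this is the analogue of the control on the size of $A$ used to upgrade the dominated splitting to an Anosov splitting. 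Combining the energy identity with the displayed relation should force $\int_M|A|^2_g\le 0$, hence $A\equiv 0$.

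The main obstacle is the coupling term $aVu$: unlike the geodesic case it mixes the Fourier modes of $u$, so the scalar cohomological equation does not by itself separate into modes, and the integral identity above does not close in the naive way. The way around this -- and the technical heart of the argument -- is to perform the same gauge change and conjugation of the derivative cocycle employed in Proposition~\ref{prop:basic} and Theorem~\ref{thm:workhorse}, which is exactly what makes the quadratic-form/Pestov computation tractable. Once the curvature sign $K_g<0$ is fed in, the strict positivity of $\int_{SM}(Va)^2\,d\mu$ for $A\not\equiv 0$ yields the contradiction and completes the ``only if'' direction.
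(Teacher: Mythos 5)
Your setup is sound and matches the paper's skeleton: the ``if'' direction, the divergence computation $L_F\Theta=V(a)\,\Theta$, the reduction of ``absolutely continuous invariant measure'' to ``smooth invariant volume'' (the paper does this via the smooth Liv\v sic theorem \cite[Corollary 2.1]{LMM}), and the resulting cohomological equation $Fu=\pm Va$. But there is a genuine gap exactly where you acknowledge one: you never produce the identity that closes the estimate, and the mechanism you guess for closing it is not the one that works. The untwisted Pestov-type identity for thermostats --- the case $c=0$ of \eqref{eq:l2} --- carries the weight $K_g-Ha+a^2$ on $(Vu)^2$, and Lemma~\ref{lemma:analogue_BH} ($K_g<0$) gives no control on the sign of this expression because of the derivative term $-Ha$; so ``feeding in the curvature sign'' cannot rescue your displayed relation. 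Indeed, the paper's proof of this theorem never invokes Lemma~\ref{lemma:analogue_BH} at all: that lemma is used only to upgrade the dominated splitting to Anosov in Theorem~\ref{thm:upgradeanosov}, and your proposal conflates the two mechanisms.

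What actually closes the argument is the twisted $L^2$ identity \eqref{eq:l2} of \cite{JP} with $H_c=H+cV$ and the specific choice $c=V(a)/m$ --- the integral-identity analogue of the gauge $p=Va/m$ that you correctly sense is needed but do not execute. For this $c$ the coupled vortex equations make the weight identically
$Fc+c^2+K_g-H_c\lambda+\lambda^2=K_g+(1-m)|A|^2_g=-1$,
so the identity becomes $2\langle H_cu,VFu\rangle=\|Fu\|^2+\|H_cu\|^2+\|Vu\|^2$, a sum of squares on the right. The contradiction then comes not from a curvature sign but from evaluating the left-hand side directly: in the paper's convention $Fu=Va$, so $VFu=-m^2a$, and integrating by parts using the holomorphicity relation $XVa=mHa$ (equation \eqref{eq:realcond} with $\theta=0$) together with $Xu=Va-aVu$ and $c=V(a)/m$ gives $2\langle H_cu,VFu\rangle=-2m\|Va\|^2\leqslant 0$. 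Hence $Va\equiv 0$, and since $a$ lives in the Fourier modes $\pm m$ with $m\geqslant 2$, $A\equiv 0$. Your half-integrated identity and mode bookkeeping are not wrong, but they do not by themselves force $\int_M|A|^2_g\leqslant 0$; without the specific twist $c=Va/m$ and the cross-term computation, the proposal remains a plausible plan rather than a proof.
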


We remark that due to a theorem of Ghys \cite{Ghy0} Anosov thermostat flows are H\"older orbit equivalent to the geodesic flow of (any) negatively curved metric of $M$ and hence transitive (to be precise, \cite{Ghy0} establishes a topological equivalence and the  H\"older orbit equivalence follows from \cite[Theorem 19.1.5]{KH}).

In~\cite{BConvDivI}, Benoist also observes that the regularity of the weak foliations of the Hilbert geodesic flow coincides with the regularity of the boundary of the divisible convex set $(\Omega,\Gamma)$. By a result of Benz\'ecri~\cite{Benz}, the boundary has regularity $C^2$ if and only if $\Omega$ is an ellipsoid, in which case the induced Finsler metric is Riemannian and hyperbolic. Hence one might speculate that if a solution to the coupled vortex equations~\eqref{eq:maineqshol} gives rise to an Anosov flow having a weak foliation of regularity $C^2$, then $A$ vanishes identically. While we cannot prove this in general, we use Theorem \ref{thm:entp} to resolve the odd case:
\setcounter{section}{7}
\setcounter{Theorem}{0}
\begin{Theorem}Suppose an Anosov thermostat given by the coupled vortex equations has a weak foliation of class $C^{2}$ and $m$ is odd. Then $A$ vanishes identically.
\end{Theorem}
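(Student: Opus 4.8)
The plan is to reduce the statement to the existence of an absolutely continuous invariant measure and then invoke Theorem~\ref{thm:entp}; the hypothesis that $m$ is odd enters only through a time-reversing symmetry. Write $\sigma\colon SM\to SM$, $(x,v)\mapsto(x,-v)$ for the flip. Then $\sigma_*V=V$ and $\sigma_*X=-X$, while the function $a$ representing the degree-$m$ differential $A$ satisfies $a\circ\sigma=(-1)^m a=-a$ since $m$ is odd. Hence, using $\sigma_*(aV)=(a\circ\sigma)\,\sigma_*V=-aV$,
\[
\sigma_*F=\sigma_*(X+aV)=-X-aV=-F,
\]
so $\sigma$ conjugates the flow to its inverse. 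In particular $\sigma$ interchanges the two weak foliations, $\sigma(W^{ws})=W^{wu}$, and since $\sigma$ is a smooth involution the hypothesis that one weak foliation is $C^2$ forces both to be $C^2$; the corresponding slope functions introduced below then satisfy $r_u=-\,r_s\circ\sigma$.

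Next I would record the measure-theoretic reduction. With respect to the Liouville measure $dV$ one has $\operatorname{div}F=V(a)$ (as $X$ and $V$ preserve $dV$), so a positive density $\rho$ yields an invariant measure $\rho\,dV$ exactly when $F(\log\rho)=-V(a)$. Thus the flow preserves an absolutely continuous measure if and only if $V(a)$ is a coboundary; since the flow is transitive Anosov and $V(a)$ is smooth, Livšic's theorem reduces this to $\int_\gamma V(a)\,dt=0$ for every closed orbit $\gamma$. To connect with the foliations I would describe the weak bundles as $\mathrm{span}\{F,\;H+r_{s,u}V\}$; invariance under $F$ becomes a Riccati equation $F(r)+r^2-V(a)\,r+\big(K+a^2-H(a)\big)=0$ for $r\in\{r_s,r_u\}$. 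Subtracting the two equations and dividing by $r_s-r_u$ gives the convention-robust identity
\[
V(a)=F\!\left(\log|r_s-r_u|\right)+(r_s+r_u),
\]
whence $\int_\gamma V(a)\,dt=\int_\gamma(r_s+r_u)\,dt=\lambda^u_\gamma+\lambda^s_\gamma$, the sum of the transverse Lyapunov exponents of $\gamma$. The task is therefore to show this sum vanishes on every periodic orbit.

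The crux, and the step I expect to be the main obstacle, is to deduce this vanishing from the $C^2$-regularity of both weak foliations. Here regularity is essential: it makes $r_s,r_u$ into $C^1$ functions, hence yields a $C^1$ invariant splitting of the transverse bundle together with a continuous transverse area element built from $|r_s-r_u|$, while the flip pairs the two foliations, since $r_u=-\,r_s\circ\sigma$ forces the transverse expansion rates $c_s=V(a)-r_s$ and $c_u=V(a)-r_u$ to obey $c_u=-\,c_s\circ\sigma$. The delicate point is to convert this symmetry, together with the regularity, into the cohomological triviality of $V(a)$ on periodic orbits; this is exactly where the rigid thermostat form $F=X+aV$—as opposed to a mere time change of a geodesic flow, for which the obstruction can be nonzero even with smooth foliations—and the $C^2$ hypothesis must be used. (Equivalently, one may phrase the goal as the vanishing of the entropy production $-\int\operatorname{div}F\,d\mu^+$ for the SRB measure $\mu^+$, which the flip renders antisymmetric.) Granting $\int_\gamma V(a)\,dt=0$ for all $\gamma$, Livšic produces a smooth positive $\rho$ with $F(\log\rho)=-V(a)$, hence an absolutely continuous invariant measure, and Theorem~\ref{thm:entp} then forces $A\equiv0$.
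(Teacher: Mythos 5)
Your opening step (the flip $\sigma(x,v)=(x,-v)$ anti-commutes with the flow when $m$ is odd, hence carries one weak foliation to the other and makes both $C^2$) and your closing step (an absolutely continuous invariant measure plus Theorem~\ref{thm:novolume} forces $A\equiv 0$) both agree with the paper. But the middle of your argument has a genuine gap, and you concede as much: everything hinges on showing $\int_\gamma V(a)\,dt=0$ for every closed orbit $\gamma$, and you offer no mechanism for this. The identity $V(a)=F\left(\log\lvert r_s-r_u\rvert\right)+(r_s+r_u)$ is correct but contentless for this purpose: it is just Liouville's formula $\det\Psi_T=\exp\int_\gamma V\lambda$ restated, valid for any Anosov thermostat with or without $C^2$ foliations, so it cannot by itself extract anything from the regularity hypothesis. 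The flip symmetry also cannot close the gap: since $(Va)\circ\sigma=-Va$ and $\sigma\circ\phi_t=\phi_{-t}\circ\sigma$, the flip sends a closed orbit $\gamma$ to a \emph{different} closed orbit $\sigma\gamma$ (the time-reversal, generally not freely homotopic to $\gamma$ in $SM$) with $\int_{\sigma\gamma}V(a)\,dt=-\int_\gamma V(a)\,dt$; this pairs orbits with opposite obstructions but does not make any single obstruction vanish. Indeed, if symmetry alone sufficed, the $C^2$ hypothesis would be superfluous and every Anosov thermostat with $m$ odd would preserve a smooth volume — contradicting Theorem~\ref{thm:novolume}, since for instance every nonzero holomorphic cubic differential yields an Anosov, dissipative example by Theorems~\ref{thm:upgradeanosov} and~\ref{thm:novolume}.

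The missing ingredient, which is what the paper actually uses, is a global rigidity theorem rather than a periodic-orbit/Liv\v{s}ic computation: by Ghys \cite{Ghy2}, a smooth Anosov flow on a closed $3$-manifold whose weak stable and unstable foliations are of class $C^2$ is smoothly orbit equivalent to a quasi-fuchsian flow determined by a pair $([g_1],[g_2])$ of points in Teichm\"uller space. The flip, being isotopic to the identity and exchanging the two weak foliations, forces $[g_1]=[g_2]$, so the model is an honest geodesic flow preserving a volume form; hence the thermostat preserves a volume form and Theorem~\ref{thm:novolume} applies. Your plan to instead verify the Liv\v{s}ic obstruction orbitwise would require a new local-to-global argument converting $C^2$ regularity into cohomological triviality of $V(a)$ — exactly the step you flag as "the crux" — and no such argument is supplied; without importing Ghys-type rigidity (or a Godbillon--Vey computation, which the paper notes is inconclusive for $m\geqslant 3$), the proof does not go through.
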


The orbits of our flow -- when projected to the surface $M$ -- define what is known as a path geometry on $M$, that is, a prescription of a path on $M$ for every direction in each tangent space. In the case where $A$ vanishes identically the paths are the geodesics of a hyperbolic metric and in the case where $m=3$ the paths are the geodesics of a properly convex projective structure. In both cases, the path geometry is~\textit{flat}, by which we mean it is locally equivalent to the path geometry of great circles on the $2$-sphere. In the final section of the article we show:
\setcounter{section}{8}
\setcounter{Theorem}{2}
\begin{Theorem}
Let $(g,A)$ be a pair satisfying the coupled vortex equations $\bar{\partial}A=0$ and $K_g=-1+(m-1)|A|^{2}_{g}$. Then the path geometry defined by the thermostat associated to $(g,A)$ is flat if and only if $m=3$ or $A$ vanishes identically.
\end{Theorem} 

Holomorphic differentials appear naturally in higher Teichm\"uller theory and here we briefly provide some context for our results while referring the reader to the recent survey~\cite{Wienhard} by Wienhard for a nice introduction to this currently very active research topic. Generalizing Teichm\"uller space, Hitchin~\cite{Hitchin} identified a connected component $\mathcal{H}(M,G)$ -- nowadays called the~\textit{Hitchin component} -- in the representation variety $\mathrm{Hom}\left(\pi_1M,G\right)/G$, where $M$ is a connected closed oriented surface of negative Euler characteristic and $G$ a real split Lie group. Fixing a conformal structure $[g]$ on $M$, Hitchin used the theory of Higgs bundles~\cite{Hitchin2} to provide a parametrisation of $\mathcal{H}(M,G)$ in terms of holomorphic differentials on $(M,[g])$. While Hitchin's parametrisation of $\mathcal{H}(M,G)$ relies on the choice of an arbitrary conformal structure $[g]$ on $M$, Labourie~\cite{Lab2} was recently able to construct a canonical parametrisation of $\mathcal{H}(M,G)$ in the case where $G$ is $\mathrm{PSL}(3,\R)$, $\mathrm{PSp}(4,\R)$ or the split form $\mathrm{G}_{2,0}$ of the exceptional group $\mathrm{G}_2$ (see also~\cite{Lab} and~\cite{Loftin} for the case $G=\mathrm{PSL}(3,\R)$). More precisely, Labourie obtains a mapping class group equivariant identification of $\mathcal{H}(M,G)$ with the fibre bundle over Teichm\"uller space whose fibre at $J$ is $H^0(M,K_{M,J}^3)$, $H^0(M,K_{M,J}^4)$ and $H^0(M,K_{M,J}^6)$ respectively. By the work of Goldman~\cite{Goldman} and Choi--Goldman~\cite{ChoiGoldman} the component $\mathcal{H}(M,\mathrm{PSL}(3,\R))$ consists of (conjugacy classes of) monodromy representations of properly convex projective structures on $M$ and this together with the work of Labourie~\cite{Lab,Lab2} and Loftin~\cite{Loftin} yields the aforementioned description of properly convex projective structures in terms of pairs $([g],A)$ with $A$ a holomorphic cubic differential.  

Using the equivariant flag curve of Labourie~\cite{Lab3}, Potrie--Sambarino~\cite{PotrieSambarino} associate several Anosov flows to every representation $\rho$ in a certain neighbourhood of the Fuchsian locus in $\mathcal{H}(M,\mathrm{PSL}(n,\R))$, $n\geqslant 4$. In particular, using the canonical embeddings
$$
\mathcal{H}(M,\mathrm{PSp}(4,\R))\subset \mathcal{H}(M,\mathrm{PSL}(4,\R)) \quad \text{and} \quad \mathcal{H}(M,\mathrm{G}_{2,0})\subset \mathcal{H}(M,\mathrm{PSL}(7,\R)),
$$
the work of Labourie~\cite{Lab3,Lab2} and Potrie--Sambarino~\cite{PotrieSambarino} yields examples of Anosov flows for certain quartic and sixtic holomorphic differential on $(M,[g])$. It would be interesting to know how these flows relate to the flows introduced here. We plan to investigate this in future work. 

\subsection*{Acknowledgements} 
The authors are grateful to Nigel Hitchin, Rafael Potrie and Andy Sanders for helpful conversations and the anonymous referee for her/his careful reading and many useful suggestions. GPP was partially funded by EPSRC grant EP/M023842/1. 

\setcounter{section}{1}

\section{Preliminaries on general thermostats}

Let $M$ be a closed oriented surface equipped with a Riemannian metric $g$, $SM$ its unit circle bundle
and $\pi:SM\to M$ the canonical projection. The latter is in fact
a principal $\mathrm{SO}(2)$-bundle and we let $V$ be the infinitesimal
generator of the action of $\mathrm{SO}(2)$.

Given a unit vector $v\in T_{x}M$, we will denote by $Jv$ the
unique unit vector orthogonal to $v$ such that $\{v,Jv\}$ is an
oriented basis of $T_{x}M$. There are two semibasic 1-forms $\omega_1$
and $\omega_2$ on $SM$, which are defined by the formulas:
\[(\omega_1)_{(x,v)}(\xi):=g\left(d_{(x,v)}\pi(\xi),v\right);\]
\[(\omega_2)_{(x,v)}(\xi):=g\left(d_{(x,v)}\pi(\xi),Jv\right).\]
The form $\omega_1$ is the canonical contact form of $SM$ whose Reeb vector
field is the geodesic vector field $X$.

A basic theorem in 2-dimensional Riemannian geometry asserts that
there exists a~unique 1-form $\psi$ on $SM$ -- the Levi-Civita connection form of $g$ -- 
such that $\psi(V)=1$ and
\begin{align}
d\omega_1&=-\omega_2\wedge\psi\label{riem1},\\
d\omega_2&=-\psi\wedge\omega_1\label{riem2},\\ 
d\psi&=-(K_g\circ\pi)\,\omega_1\wedge\omega_2,\label{riem3}
\end{align}
where $K_g$ denotes the Gaussian curvature of $g$. In fact, the form
$\psi$ is given by
\[\psi_{(x,v)}(\xi)=g\left( \frac{DZ}{dt}(0),Jv\right),\]
where $Z:(-\varepsilon,\varepsilon)\to SM$ is any curve with
$Z(0)=(x,v)$, $\dot{Z}(0)=\xi$ and $\frac{DZ}{dt}$ is the
covariant derivative of $Z$ along the curve $\pi\circ Z$.

For later use it is convenient to introduce the vector field $H$
uniquely defined by the conditions $\omega_2(H)=1$ and
$\omega_1(H)=\psi(H)=0$. The vector fields $X,H,V$ are dual to
$\omega_1,\omega_2,\psi$ and as a consequence of (\ref{riem1}--\ref{riem3}) they satisfy the commutation relations
\begin{equation}\label{comm}
[V,X]=H,\quad [V,H]=-X,\quad [X,H]=K_gV.
\end{equation}
Equations (\ref{riem1}--\ref{riem3}) also imply that the vector fields
$X,H$ and $V$ preserve the volume form $\omega_1\wedge d\omega_1$ and hence
the Liouville measure.
Note that the flow of $H$ is given by $R^{-1}\circ \phi^0_t\circ R$, where $R(x,v)=(x,Jv)$
and $\phi^0_t$ is the geodesic flow of $g$. 

Let $\lambda$ be an arbitrary smooth function on $SM$. For several of the results that we will describe below, we will not
need $\la$ to be a special polynomial in the velocities.
We consider a (generalised)~\textit{thermostat flow} on $(M,g)$, that is, a flow $\phi$ defined by 
\begin{equation}
\frac{D\dot{\gamma}}{dt}=\la(\gamma,\dot{\ga})\,J\dot{\ga}.
\label{eqgt}
\end{equation}
It is easy to check that
$$
F:=X+\lambda V
$$
is the generating vector field of $\phi$.

Now let $\Theta:=-\omega_1\wedge d\omega_1=\omega_1\wedge\omega_2\wedge\psi$. This volume form generates the
Liouville measure $d\mu$ of $SM$.

\begin{Lemma} We have:
\begin{align}
L_{F}\Theta &=V(\la)\Theta;\label{lie1}\\
L_{H}\Theta &=0;\label{lie2}\\
L_{V}\Theta &=0.\label{lie3}
\end{align}
\label{lied}
\end{Lemma}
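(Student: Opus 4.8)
The plan is to compute all three Lie derivatives directly from Cartan's formula $L_Y = d\circ\iota_Y + \iota_Y\circ d$. Since $\Theta$ is a volume form on the three-manifold $SM$ it is closed, so this reduces to $L_Y\Theta = d(\iota_Y\Theta)$ for every vector field $Y$; equivalently, I would differentiate the factored expression $\Theta=\omega_1\wedge\omega_2\wedge\psi$ by the Leibniz rule once the Lie derivatives of the coframe $\{\omega_1,\omega_2,\psi\}$ are known. The essential input is that this coframe is dual to the frame $\{X,H,V\}$, so each interior product of $\omega_1,\omega_2,\psi$ with $X$, $H$ or $V$ is either $0$ or $1$, while the exterior derivatives $d\omega_1,d\omega_2,d\psi$ are supplied by the structure equations (\ref{riem1})--(\ref{riem3}).

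First I would dispatch the two easy identities \eqref{lie2} and \eqref{lie3}. Applying $L_Y = d\iota_Y + \iota_Y d$ to each of $\omega_1,\omega_2,\psi$ and feeding in the structure equations yields, for $Y=V$, the relations $L_V\omega_1=\omega_2$, $L_V\omega_2=-\omega_1$, $L_V\psi=0$, and for $Y=H$, the relations $L_H\omega_1=-\psi$, $L_H\omega_2=0$, $L_H\psi=(K_g\circ\pi)\,\omega_1$. Substituting these into the Leibniz expansion of $L_Y(\omega_1\wedge\omega_2\wedge\psi)$, every surviving term carries a repeated $1$-form factor (such as $\omega_2\wedge\omega_2$ or $\omega_1\wedge\omega_2\wedge\omega_1$) and hence vanishes, giving $L_V\Theta=0$ and $L_H\Theta=0$; this re-derives the already-noted fact that $H$ and $V$ preserve the Liouville measure.

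For the main identity \eqref{lie1} I would first record, by the same computation, that $L_X\omega_1=0$, $L_X\omega_2=\psi$ and $L_X\psi=-(K_g\circ\pi)\,\omega_2$, whence the Leibniz rule again produces $L_X\Theta=0$. Because the Lie derivative is not tensorial in the multiplier $\la$, I would then split $F=X+\la V$ and use $L_{\la V}\Theta = d\la\wedge\iota_V\Theta + \la\,L_V\Theta$. Here $L_V\Theta=0$ from the previous step and $\iota_V\Theta=\omega_1\wedge\omega_2$, so $L_F\Theta = L_X\Theta + d\la\wedge\omega_1\wedge\omega_2 = d\la\wedge\omega_1\wedge\omega_2$. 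Expanding $d\la = X(\la)\,\omega_1 + H(\la)\,\omega_2 + V(\la)\,\psi$ in the dual coframe, only the $\psi$-component survives the wedge with $\omega_1\wedge\omega_2$, and since $\psi\wedge\omega_1\wedge\omega_2=\omega_1\wedge\omega_2\wedge\psi=\Theta$ this leaves exactly $L_F\Theta = V(\la)\,\Theta$, as claimed.

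The computation is routine, and the point demanding most care -- the crux rather than a genuine obstacle -- is the non-tensorial term $d\la\wedge\iota_V\Theta$ coming from the function factor in $\la V$: it is precisely this term, together with the expansion of $d\la$ in the coframe dual to $\{X,H,V\}$, that produces the coefficient $V(\la)$ and thereby measures the failure of $F$ to preserve the Liouville measure. The only remaining bookkeeping is keeping track of the signs in the Leibniz expansions and in the structure equations (\ref{riem1})--(\ref{riem3}).
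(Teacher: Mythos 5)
Your proposal is correct and follows essentially the same route as the paper: Cartan's formula $L_Y\Theta=d(\iota_Y\Theta)$ combined with the structure equations (\ref{riem1})--(\ref{riem3}), the splitting $F=X+\lambda V$, and the non-tensorial term $d\lambda\wedge\iota_V\Theta=d\lambda\wedge\omega_1\wedge\omega_2$ whose expansion in the coframe dual to $\{X,H,V\}$ yields the coefficient $V(\lambda)$. Your intermediate step of computing $L_Y\omega_1$, $L_Y\omega_2$, $L_Y\psi$ for each frame field is slightly more expansive bookkeeping than the paper's direct observation that $\iota_V\Theta=\pi^*\Omega_a$ (and similarly for $X$, $H$) is closed, but it is the same argument in substance.
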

\begin{proof} Note that for any vector field $Y$, $L_{Y}\Theta=d(i_{Y}\Theta)$, by Cartan's formula. Since
$i_{V}\Theta=\omega_1\wedge\omega_2=\pi^*\Omega_{a}$, where $\Omega_{a}$ is the area
form of $M$, we see that $L_{V}\Theta=0$.
Similarly, $L_{X}\Theta=L_{H}\Theta=0$.
Finally $L_{F}\Theta=L_{X}\Theta+L_{\la V}\Theta=d(i_{\la V}\Theta)=
V(\la)\Theta$.
\end{proof}

\subsection{Jacobi equations} It is easy to derive the ODEs governing the behaviour of $d\phi_{t}$ using the bracket relations
above. Given $\xi\in T_{(x,v)}SM$ (the initial conditions), if we write
\[d\phi_{t}(\xi)=xF+yH+uV\]
then 
\begin{align}
\dot{x} &=\lambda\,y;\label{jac1}\\
\dot{y} &=u;\label{jac2}\\
\dot{u} &=V(\lambda)\dot{y}-\kappa y,\label{jac3}
\end{align}
where $\kappa:=K_g-H\lambda+\lambda^2$.

\subsection{Quotient cocycle} We consider the rank two quotient vector bundle $E=TSM/\R F$. We use the notation $[\xi]$ with $\xi \in TSM$ for the elements of $E$. Note that $d\phi_t$ descends to the quotient to define a mapping  
$$
\rho : E\times \R \to E, \quad ([\xi],t)\mapsto \rho([\xi],t)=[d\phi_t(\xi)] 
$$
satisfying $\rho_{t}\circ\rho_{s}=\rho_{t+s}$ for all $t,s \in \R$. The basis of vector fields $(F,H,V)$ on $SM$ defines a vector bundle isomorphism $TSM \simeq SM\times\R^3$ and consequently an identification $E\simeq SM \times \R^2$. Therefore, for each $t \in \R$, we obtain a unique map $\Psi_t : SM \to GL(2,\R)$ defined by the rule
$$
\rho_t((x,v),w)=\left(\phi_t(x,v),\Psi_t(x,v)w\right)
$$
for all $((x,v),w) \in E\simeq SM\times \R^2$. The map $\Psi : SM \times \R \to GL(2,\R)$ satisfies
$$
\Psi_{t+s}(x,v)=\Psi_{s}(\phi_{t}(x,v))\Psi_{t}(x,v)
$$
for all $(x,v) \in SM$ and $t,s \in \R$, and hence defines an $GL(2,\R)$-valued cocycle on $SM$ with respect to the $\R$-action defined by $\phi$. Explicitly,  $\Psi_t$ is the matrix whose action on $\R^2$ is given by 
$$
\Psi_t(x,v): \left( \begin{array}{c} y(0) \\ \dot{y}(0) \end{array} \right) \mapsto \left( \begin{array}{c} y(t) \\ \dot{y}(t) \end{array} \right)
$$
where $\ddot{y}(t)-V(\lambda)(\phi_{t}(x,v))\dot{y}(t) + \kappa(\phi_t(x,v)) y(t) = 0$.

Note that for thermostats the 2-plane bundle spanned by $H$ and $V$ is in general {\it not} invariant under $d\phi_{t}$.

\subsection{Infinitesimal generators and conjugate cocycles}\label{subsection:cocycle} Given a cocycle $\Psi_{t}:SM\times\R\to GL(2,\R)$ we define its infinitesimal generator $\mathbb{B}:SM\to \mathfrak{gl}(2,\R)$ as
\[\mathbb{B}(x,v):=-\left.\frac{d}{dt}\right|_{t=0}\Psi_{t}(x,v).\]
The cocycle $\Psi_{t}$ can be recovered from $\mathbb{B}$ as the unique solution to
\[\frac{d}{dt}\Psi_{t}(x,v)+\mathbb{B}(\phi_{t}(x,v))\Psi_{t}(x,v)=0,\;\;\;\Psi_{0}(x,v)=\mbox{\rm Id}.\]
For the case of thermostats, it is immediate to check that
$$
\mathbb{B}=\begin{pmatrix} 0 & -1 \\ \kappa & -V\lambda\end{pmatrix}.
$$
Given a smooth map $\mathcal P:SM\to GL(2,\R)$ (a gauge) we can define a new cocycle by conjugation as
\[\tilde{\Psi}_{t}(x,v)=\mathcal{P}^{-1}(\phi_{t}(x,v))\Psi_{t}(x,v)\mathcal{P}(x,v).\]
It is easy to check that the infinitesimal generator $\tilde{\mathbb{B}}$ of $\tilde{\Psi}_{t}$ is related to $\mathbb{B}$ by
\begin{equation}
\tilde{\mathbb{B}}=\mathcal{P}^{-1}\mathbb{B}\mathcal{P}+\mathcal{P}^{-1}F\mathcal{P}.
\label{eq:conjugate}
\end{equation}

\section{Dominated splitting and hyperbolicity for thermostats}

We are interested in the questions: when is this cocycle hyperbolic? When does it have a dominated splitting? We start with some definitions.

\begin{Definition} The cocycle $\Psi_t$ is~\textit{free of conjugate points} if any non-trivial
solution of the Jacobi equation $\ddot{y}-V(\lambda)\dot{y}+\kappa y=0$ with $y(0)=0$ vanishes only at $t=0$.
\end{Definition}

\begin{Definition} The cocycle $\Psi_t$ is said to be~\textit{hyperbolic} if there
exists a splitting
$E=E^{u}\oplus E^{s}$ where $E^u,E^s$ are continuous $\rho$-invariant line subbundles of $TSM$, and constants $C>0$ and $0<\zeta<1<\eta$ such that 
for all $t>0$ we have
$$
\Vert\Psi_{-t}|_{E^u}\Vert\leqslant C\,\eta^{-t}\quad \text{and}\quad \Vert\Psi_{t}|_{E^s}\Vert\leqslant C\,\zeta^{t}.
$$ 
\end{Definition}
We also say:
\begin{Definition} The cocycle $\Psi_t$ is said to have a~\textit{dominated splitting} if there
is a continuous $\rho$-invariant splitting
$E=E^{u}\oplus E^{s}$, and constants $C>0$ and $0<\tau<1$ such that 
for all $t>0$ we have
\[\|\Psi_{t}|_{E^{s}(x,v)}\|\|\Psi_{-t}|_{E^{u}(\phi_{t}(x,v))}\|\leq C\,\tau^{t}.\]
\end{Definition}

Obviously hyperbolicity implies dominated splitting. It also implies that there are no conjugate points \cite{DP2}. Moreover the cocycle $\Psi_{t}$ is hyperbolic if and only if the thermostat flow $\phi$ is Anosov (cf. for instance \cite[Proposition 5.1] {W1} where it is proved that the subbundles $E^{s,u}$ of $E$ lift to subbundles of $TSM$ to give the usual definition of Anosov flow). We shall say that $\phi$ has a dominated splitting if $\Psi_{t}$ has a dominated splitting (this is the adequate notion of dominated splittings for flows, see e.g. \cite[Definition 1]{AR-H}). For the case of flows on 3-manifolds, as it is our case, the existence of a dominated splitting can produce hyperbolicity if one has additional information on the closed orbits. Indeed \cite[Theorem B]{AR-H} implies that if all closed orbits of $\phi$ are hyperbolic saddles, then $SM=\Lambda\cup\mathcal T$ where $\Lambda$ is a hyperbolic invariant set and $\mathcal T$ consists of finitely many normally hyperbolic irrational tori.

A very convenient way to establish the aforementioned properties for cocycles is to use quadratic forms as in \cite{L,W2, W3}. In particular, we have~\cite[Proposition 4.1 $\&$ Theorem 4.4]{W3}:
\begin{Proposition}[Wojtkowski]\label{ppn:sufsplit}
Let $Q$ be a continuous non-degenerate quadratic form on $E$. Suppose furthermore that the  derivative
$$
\dot{Q}\!\left([\xi]\right):=\left.\frac{\d}{\d t}\right\vert_{t=0}Q\!\left([d\phi_t(\xi)]\right)
$$
exists for all $[\xi] \in E$. Then $\Psi_t$ has a dominated splitting if $\dot{Q}([\xi])>0$ for all $[\xi]\neq 0$ with $Q([\xi])=0$. If the stronger property $\dot{Q}([\xi])>0$ for all $[\xi]\neq 0$ holds, then $\Psi_t$ is hyperbolic.
\end{Proposition}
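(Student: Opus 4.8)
The plan is to treat $Q$ as a Lyapunov quadratic form generating a field of invariant cones and to run the classical cone-contraction argument. I take $Q$ to have signature $(1,1)$ in each fibre; this is the case relevant to the thermostat cocycle, and a definite $Q$ would make the light cone trivial and the whole argument empty. Thus in each fibre $\{Q=0\}$ is a pair of lines separating the open positive cone $C^{+}=\{Q>0\}$ from the open negative cone $C^{-}=\{Q<0\}$; write $K^{\pm}$ for their closures. Setting $q_{\xi}(t):=Q(\Psi_t[\xi])$ and using the semigroup property $\rho_{t+s}=\rho_t\circ\rho_s$, one gets $\dot{q}_{\xi}(t)=\dot{Q}(\Psi_t[\xi])$. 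The first step is strict cone invariance: for $[\xi]\in K^{+}\setminus\{0\}$, should $q_{\xi}$ first vanish at some $t_0>0$, then $q_{\xi}>0$ on $(0,t_0)$ (using $\dot{q}_{\xi}(0)>0$ when $[\xi]$ starts on the light cone), so $\dot{q}_{\xi}(t_0)\leq 0$; but $\Psi_{t_0}[\xi]$ is a nonzero light-cone vector, so the hypothesis forces $\dot{q}_{\xi}(t_0)>0$, a contradiction. Hence $\Psi_t(K^{+}\setminus\{0\})\subset C^{+}$ for all $t>0$, and symmetrically $\Psi_{-t}(K^{-}\setminus\{0\})\subset C^{-}$. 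This uses only positivity of $\dot{Q}$ on the light cone.

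Next I would construct the splitting by nested intersection, setting
$$E^{u}(x,v):=\bigcap_{t\geq 0}\Psi_t\big(K^{+}_{\phi_{-t}(x,v)}\big),\qquad E^{s}(x,v):=\bigcap_{t\geq 0}\Psi_{-t}\big(K^{-}_{\phi_{t}(x,v)}\big).$$
Strict invariance makes each of these a nested decreasing family of cones, and compactness of $SM$ upgrades the pointwise strict invariance to a uniform one over a fixed time $T$; the projectivised diameters then shrink geometrically, so each intersection collapses to a single line. This yields continuous $\rho$-invariant line fields $E^{u},E^{s}$, lying uniformly in the interiors of $C^{+}$ and $C^{-}$ respectively (since $E^{u}\subset\Psi_T(K^{+})$, a uniformly interior subcone) and hence transverse to one another. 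Finally, a continuous invariant cone field that is uniformly strictly contracted is exactly the standard criterion for a dominated splitting in the sense used in the text (cf. \cite{AR-H}), which delivers the estimate $\|\Psi_t|_{E^{s}}\|\,\|\Psi_{-t}|_{E^{u}}\|\leq C\tau^{t}$.

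For the hyperbolicity statement I would upgrade these estimates using the stronger hypothesis $\dot{Q}>0$ on every nonzero vector. Since $Q$ and $\dot{Q}$ are both homogeneous of degree two, the quotient $\dot{Q}/Q$ descends to the projectivised bundle; on the compact projectivised positive cone it is continuous, positive in the interior, and tends to $+\infty$ at the light cone, hence bounded below by some $2a>0$. Integrating $\tfrac{d}{dt}\log q_{\xi}=\dot{Q}/Q\geq 2a$ gives $Q(\Psi_t[\xi])\geq e^{2at}Q([\xi])$ on $C^{+}$; because $E^{u}$ sits uniformly inside $C^{+}$, there $Q$ and $\|\cdot\|^{2}$ are uniformly comparable, so $\|\Psi_{-t}|_{E^{u}}\|\leq C\eta^{-t}$ with $\eta=e^{a}$. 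The symmetric computation on $C^{-}$, where $\dot{Q}/Q\leq -2a$, gives $\|\Psi_t|_{E^{s}}\|\leq C\zeta^{t}$ with $\zeta=e^{-a}$, which is precisely hyperbolicity.

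The hard part, I expect, is not the infinitesimal monotonicity of $Q$ but the passage from it to \emph{uniform} exponential rates. Everything hinges on showing that the invariant lines $E^{u},E^{s}$ stay a definite distance from the light cone, uniformly over the compact base, so that $Q$ and the norm are uniformly comparable along them and the pointwise positivity of $\dot{Q}$ integrates to genuine exponential bounds rather than merely asymptotic growth. This is exactly where compactness of $SM$ is indispensable; once it is in place, the remaining content is the bookkeeping of the cone dynamics.
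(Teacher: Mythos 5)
Your proposal is correct and takes essentially the same approach as the paper, which offers no independent proof but quotes Wojtkowski \cite[Proposition 4.1 $\&$ Theorem 4.4]{W3}: the signature-$(1,1)$ cone-invariance argument, the nested-intersection construction of $E^{u}$ (identical to the paper's \eqref{eq:construction}), and the integration of $\dot{Q}/Q$ along the uniformly interior invariant lines are precisely the quadratic-form method being invoked. One small caveat: your lower bound $\dot{Q}/Q\geqslant 2a$ tacitly assumes $\dot{Q}$ is continuous, which the statement does not grant; this is harmless, since one can instead bound below the continuous time-one increment $Q\!\left(\Psi_{1}[\xi]\right)-Q\!\left([\xi]\right)=\int_{0}^{1}\dot{Q}\!\left(\Psi_{s}[\xi]\right)ds>0$ on the compact unit sphere bundle of $E$.
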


In what follows it will be helpful to understand how the spaces $E^{u,s}$ are constructed using $Q$. This is explained in detail in \cite[Proposition 4.1]{W3}, so here we just give a brief summary adapted to our situation. We let $\mathcal L_{+}(x,v)$ denote the set of all 1-dimensional subspaces $W$ such that $Q_{(x,v)}$ is positive on $W$. The condition on the quadratic form $Q$ ensures that $\Psi_{t}$ acts as a contraction on $\mathcal L_{+}$ and hence there is a unique point of intersection
\begin{equation}
E^{u}(x,v)=\bigcap_{t>0}\Psi_{t}(\phi_{-t}(x,v))\mathcal L_{+}(\phi_{-t}(x,v)).
\label{eq:construction}
\end{equation}
All our quadratic forms $Q$ below will have the property that $Q(0,b)=0$ (using the identification $E\simeq SM \times \R^2$) and hence we can construct $E^{u}$ (and $E^s$) simply by applying the procedure \eqref{eq:construction} to the vertical subspace $\R(0,1)$, that is,
\begin{equation}
E^{u}(x,v)=\lim_{t\to\infty}\Psi_{t}(\phi_{-t}(x,v))\R\left( \begin{array}{c} 0 \\ 1 \end{array} \right).
\label{eq:construction2}
\end{equation}

Let us put these ideas to use. Define ${\mathbb K}=\kappa +FV\lambda$.

\begin{Proposition} Assume $\mathbb{K}<0$. Then $\phi$ is Anosov.
\label{prop:basic}
\end{Proposition}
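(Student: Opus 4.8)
The plan is to verify the hypotheses of Wojtkowski's criterion (Proposition~\ref{ppn:sufsplit}) for the quotient cocycle $\Psi_t$, using the equivalence recorded above that $\phi$ is Anosov if and only if $\Psi_t$ is hyperbolic. Since the Jacobi equation $\ddot y-V(\lambda)\dot y+\kappa y=0$ carries a first-order (damping) term, I would first remove it by a well-chosen gauge. Concretely, take $\mathcal P=\begin{pmatrix}1&0\\ V\lambda&1\end{pmatrix}$ and conjugate $\mathbb B$ as in \eqref{eq:conjugate}. Because the gauge term $\mathcal P^{-1}F\mathcal P$ feeds the flow-derivative $FV\lambda$ into the lower-left corner, a direct computation gives the conjugate generator
\[
\tilde{\mathbb B}=\begin{pmatrix}-V\lambda & -1\\ \mathbb{K} & 0\end{pmatrix},
\]
so that the combination $\mathbb{K}=\kappa+FV\lambda$ appears precisely as the effective curvature of the conjugated equation. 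This is the careful choice of gauge alluded to in the introduction.

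In the conjugate coordinates $(\tilde y,\tilde u)=\mathcal P^{-1}(y,u)$ the vectors evolve by $\dot{\tilde y}=(V\lambda)\tilde y+\tilde u$ and $\dot{\tilde u}=-\mathbb{K}\,\tilde y$. I would then test the non-degenerate quadratic form $Q=\tilde y\,\tilde u$, which satisfies $Q(0,b)=0$ so that the construction \eqref{eq:construction2} applies. Differentiating along the flow,
\[
\dot Q=\tilde u^{2}+(V\lambda)\,\tilde y\,\tilde u-\mathbb{K}\,\tilde y^{2},
\]
and on the null cone $\{Q=0\}$ — where either $\tilde y=0$ or $\tilde u=0$ — this reduces to $\tilde u^{2}>0$ and to $-\mathbb{K}\,\tilde y^{2}$ respectively. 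Hence, under the hypothesis $\mathbb{K}<0$, one has $\dot Q>0$ for every $[\xi]\neq 0$ with $Q([\xi])=0$, and Proposition~\ref{ppn:sufsplit} yields a dominated splitting $E=E^{u}\oplus E^{s}$. Moreover the cone $\{Q>0\}$ is strictly forward invariant, so no nontrivial solution with $\tilde y(0)=0$ can vanish again; that is, $\mathbb{K}<0$ also makes $\Psi_t$ free of conjugate points.

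The remaining, and genuinely harder, step is to upgrade this dominated splitting to hyperbolicity. The obstruction is structural: the cross term $(V\lambda)\tilde y\tilde u$ prevents $\dot Q$ from being positive on all of $E$ (it is positive everywhere only when $V\lambda\equiv 0$, e.g.\ for the geodesic flow), so the quadratic form alone cannot force uniform expansion. Here I would invoke the three-manifold theory recalled before the statement: the goal is to show that every closed orbit is a hyperbolic saddle — disconjugacy excludes elliptic Floquet multipliers and the strict cone invariance forces real, separated multipliers — and then to apply \cite[Theorem B]{AR-H} to obtain $SM=\Lambda\cup\mathcal T$, with $\Lambda$ hyperbolic and $\mathcal T$ a finite union of normally hyperbolic irrational tori.

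The crux is therefore to exclude the torus component $\mathcal T$, equivalently to rule out the parabolic case where a closed orbit acquires a central, unit-modulus multiplier. I would argue that an invariant $2$-torus carrying such a central direction is incompatible either with the global structure of $SM$ over a surface of negative Euler characteristic or with transitivity, forcing $\mathcal T=\emptyset$, so that $SM=\Lambda$ and $\phi$ is Anosov. This exclusion of $\mathcal T$, together with the verification that the periodic data are genuine saddles rather than parabolic, is the step I expect to be the main obstacle; everything up to the dominated splitting is an essentially mechanical gauge-and-quadratic-form computation.
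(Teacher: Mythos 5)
Your gauge $\mathcal P=\begin{pmatrix}1&0\\ V\lambda&1\end{pmatrix}$ (i.e.\ $p=V\lambda$, so $\kappa_p=\mathbb{K}$) and the form $Q=\tilde y\,\tilde u$ reproduce the paper's first step exactly: your $(\tilde y,\tilde u)$ is the paper's $(y,z)$ with $z=\dot y-V(\lambda)y$, and the dominated splitting via Proposition~\ref{ppn:sufsplit} is correct. The gap is in the upgrade to hyperbolicity. First, your claim that all closed orbits are hyperbolic saddles does not follow from what you have established: along a closed orbit, the dominated splitting forces the Floquet multipliers to be real with $|\lambda_s|<|\lambda_u|$, but \emph{not} $|\lambda_s|<1<|\lambda_u|$. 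Since the trace of $\mathbb B$ is $-V\lambda$, one has $\det\Psi_T=\exp\bigl(\int_0^T V\lambda\bigr)$, which for a dissipative thermostat need not equal $1$, so both multipliers could a priori lie on the same side of the unit circle; disconjugacy and strict cone invariance do not exclude this. Second, even granting saddles and \cite[Theorem B]{AR-H}, your exclusion of the torus component $\mathcal T$ is only asserted: there is no obvious topological obstruction ($SM$ contains incompressible tori, e.g.\ preimages of simple closed curves on $M$), and transitivity is not available at this stage — the Ghys argument applies to flows already known to be Anosov, so invoking it here would be circular.

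What you are missing is that the quadratic form \emph{does} force uniform expansion here, contrary to your structural-obstruction claim, because of the special form of the conjugated system: $\dot{\tilde u}=-\mathbb{K}\tilde y$ has no $\tilde u$-term. By the cone construction \eqref{eq:construction}--\eqref{eq:construction2}, the invariant bundles $E^{u,s}$ meet neither $\{\tilde y=0\}$ nor $\{\tilde u=0\}$, so there are continuous functions $r^{u,s}$ with $H+r^{u,s}V\in E^{u,s}$ and $r^u-V\lambda>0>r^s-V\lambda$. On $E^u$ one has $\tilde u=(r^u-V\lambda)\tilde y$, hence
\begin{equation*}
\dot{\tilde u}=-\mathbb{K}\,\tilde y=-\mathbb{K}\,(r^u-V\lambda)^{-1}\tilde u,
\end{equation*}
and by compactness of $SM$ the coefficient $-\mathbb{K}(r^u-V\lambda)^{-1}$ is bounded below by a positive constant, giving uniform exponential growth of $\tilde u$, hence of $\Psi_t|_{E^u}$; the symmetric argument on $E^s$ gives contraction. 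This is the paper's proof, and it makes the closed-orbit analysis and \cite[Theorem B]{AR-H} unnecessary. (This is also exactly why the upgrade works for $p=V\lambda$ but fails for general $p$, cf.\ the alternative \eqref{eq:alternative} in Remark~\ref{rem:upgrade}.)
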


\begin{proof}We let $(a,b)$ denote the standard coordinates on $\R^2$. Using the identification $E\simeq SM\times \R^2$ we define a quadratic form on $E$ by the rule
\[Q_{(x,v)}(a,b)=(b-V(\lambda)a)a.\]
Then
\[Q_{\phi_{t}(x,v)}(\Psi_{t}(a,b))=(\dot{y}-V(\lambda)y)y,\]
where $y$ is the unique solution of 
$$\ddot{y}-V(\lambda)\dot{y}+\kappa y=0,
$$ with $y(0)=a$ and $\dot{y}(0)=b$. 
A simple calculation shows that
\[\dot{Q}=\frac{d}{dt}Q_{\phi_{t}(x,v)}(\Psi_{t}(a,b))=-\mathbb{K}y^2+(\dot{y}-V(\lambda)y)\dot{y}.\]
Since $\mathbb{K}<0$ we see that 
$$
\left.\frac{d}{dt}\right|_{t=0}Q_{\phi_{t}(x,v)}(\Psi_{t}(a,b))>0
$$ for $(a,b)\neq 0$ and such that
$Q_{(x,v)}(a,b)=0$. Then Proposition~\ref{ppn:sufsplit} immediately implies that $\Psi_{t}$ has a dominated splitting. We can upgrade that to hyperbolic as follows.
If we let $z:=\dot{y}-V(\lambda)y$, then the quadratic form is just $zy$. By the construction of the subspaces
$E^{s,u}$ (cf. \eqref{eq:construction}) we see that $E^{s,u}$ do not contain neither $z=0$, nor $y=0$. Hence there exist continuous
functions $r^{s,u}:SM\to \mathbb{R}$ such that $H+r^{s,u} V\in E^{s,u}$. Moreover, we see that $r^{u}-V\lambda>0$ and $r^{s}-V\lambda<0$.
Consider now a solution with initial conditions $(y(0),\dot{y}(0))\in E^{u}$. Then $z=(r^{u}-V\lambda)y$ and
$\dot{z}=-\mathbb{K}y=-\mathbb{K}(r^{u}-V\lambda)^{-1}z$. This gives exponential growth for $z$ and hence the desired exponential
growth for $\Psi_{t}$ on $E^{u}$. Arguing in a similar way with $E^{s}$, we deduce that $\Psi_{t}$ is hyperbolic.
\end{proof}

\begin{Remark}{\rm By considering the quadratic form $Q=y\dot{y}$ we can deduce with a similar proof that if $\kappa<0$
 the thermostat flow $\phi$ is Anosov. This is because $\dot{Q}=\dot{y}^{2}-\kappa y^{2}+V(\lambda)y\dot{y}$.
 We have $r^{u}>0$ and hyperbolicity follows from $\dot{y}=r^{u}y$ when  $(y(0),\dot{y}(0))\in E^{u}$. }
\end{Remark}

In fact we can generalise this further as follows.

\begin{Theorem} Let $p:SM\to\mathbb{R}$ be a smooth function such that 
$$\kappa_{p}:=\kappa+Fp+p(p-V\lambda)<0.$$ Then
 $\phi$ has a dominated splitting. If in addition $\kappa_{p}+\frac{(V\lambda)^{2}}{4}<0$, then the flow is Anosov.
 \label{thm:workhorse}
\end{Theorem}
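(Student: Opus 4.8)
The plan is to run the quadratic–form argument of Proposition~\ref{prop:basic} with a $p$-weighted form; indeed Proposition~\ref{prop:basic} is exactly the case $p=V\lambda$, since then $\kappa_p=\kappa+FV\lambda=\mathbb{K}$. Working in the coordinates $(a,b)=(y,\dot y)$ on $E\simeq SM\times\R^2$, I would define the non-degenerate continuous quadratic form
\[ Q_{(x,v)}(a,b)=(b-p(x,v)\,a)\,a. \]
This is the analogue of the form $(\dot y-V\lambda\,y)y$ used in Proposition~\ref{prop:basic}, with $V\lambda$ replaced by $p$, and it corresponds to conjugating the cocycle by the gauge $\mathcal P=\begin{pmatrix} 1 & 0 \\ p & 1 \end{pmatrix}$ in \eqref{eq:conjugate}, which turns the Jacobi variable $\dot y$ into $u:=\dot y-p\,y$. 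Since $Q(0,b)=0$, the vertical line $\R(0,1)$ lies on the cone $\{Q=0\}$, so the subbundles $E^{u,s}$ can be produced by the limiting procedure \eqref{eq:construction2}.

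The computational heart of the argument is the evaluation of $\dot Q$ along the flow. Using the Jacobi equation $\ddot y=V(\lambda)\dot y-\kappa y$ together with $\dot p=Fp$ along orbits, a direct differentiation expressed in the variable $u=\dot y-p\,y$ gives
\[ \dot Q=u^{2}+(V\lambda)\,u\,y-\kappa_p\,y^{2}. \]
This is the step I expect to require care: one must correctly collect the $Fp$-contribution and the cross terms so that the $y^2$-coefficient becomes precisely $-\kappa_p=-(\kappa+Fp+p(p-V\lambda))$ and the $u^2$-coefficient normalizes to exactly $1$. It is this normalization that makes the subsequent positivity analysis transparent; everything else is routine.

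Both conclusions then follow from Proposition~\ref{ppn:sufsplit}. For the dominated splitting I would restrict to the cone boundary $Q=uy=0$: if $y=0$ then $\dot Q=u^{2}>0$, while if $u=0$ then $\dot Q=-\kappa_p\,y^{2}>0$ because $\kappa_p<0$; hence $\dot Q([\xi])>0$ whenever $Q([\xi])=0$ and $[\xi]\neq0$, and the first assertion follows. For the Anosov statement I would instead show that the extra hypothesis forces $\dot Q$ to be positive definite: viewed as a quadratic form in $(y,u)$ its matrix is $\begin{pmatrix} -\kappa_p & V\lambda/2 \\ V\lambda/2 & 1\end{pmatrix}$, whose diagonal entry $-\kappa_p$ and determinant $-\kappa_p-(V\lambda)^{2}/4$ are both positive exactly when $\kappa_p<0$ and $\kappa_p+\tfrac{(V\lambda)^{2}}{4}<0$. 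Then $\dot Q([\xi])>0$ for every $[\xi]\neq0$, so the hyperbolic half of Proposition~\ref{ppn:sufsplit} applies and $\Psi_t$ is hyperbolic, i.e.\ $\phi$ is Anosov. Note that, unlike in Proposition~\ref{prop:basic}, no separate Riccati upgrade is needed, since the additional inequality yields positive definiteness directly.
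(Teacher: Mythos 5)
Your proposal is correct and matches the paper's proof essentially verbatim: the paper uses the same form $Q=zy$ with $z:=\dot y-py$, computes the same derivative $\dot Q=z^{2}+(V\lambda)zy-\kappa_{p}y^{2}$, and concludes both claims from Proposition~\ref{ppn:sufsplit}. The only cosmetic difference is that the paper verifies positive definiteness under the extra hypothesis by completing the square, $\dot Q=\left(z-\tfrac{yV\lambda}{2}\right)^{2}-\left(\kappa_{p}+\tfrac{(V\lambda)^{2}}{4}\right)y^{2}$, while you invoke the equivalent Sylvester criterion on the matrix of $\dot Q$.
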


\begin{proof} The quadratic form to consider is $Q=zy$, where $z:=\dot{y}-py$. A calculation shows
 that
 \[\dot{Q}=z^{2}-\kappa_{p}y^{2}+zyV\lambda.\] 
 We see that $\dot{Q}>0$ whenever $zy=0$, but $(y,z)\neq 0$. The claim in the theorem again follows from Proposition~\ref{ppn:sufsplit}.
 Also note that
 \[\dot{Q}=\left(z-\frac{yV\lambda}{2}\right)^{2}-\left(\kappa_{p}+\frac{(V\lambda)^{2}}{4}\right)y^{2}>0,\]
 unless $(z,y)=0$. Hence the flow is Anosov by Proposition \ref{ppn:sufsplit}.
\end{proof}

\begin{Remark}\label{rem:upgrade} Let us see the main issue with upgrading the last theorem to ``hyperbolic'' as in the proof of Proposition \ref{prop:basic}.
Certainly we get continuous (H\"older in fact) functions $r^{s,u}$. To be definite consider the case of $E^u$ and initial conditions
$(y(0),\dot{y}(0))\in E^{u}$. Then $\dot{y}=r^{u}y$ and $z=(r^{u}-p)y$ with $r^{u}-p>0$ as before.
But now $\dot{z}=(V\lambda-p)z-\kappa_{p}y=(V\lambda-p-\frac{\kappa_{p}}{r^{u}-p})z$.
To get exponential growth we either need:
\begin{equation}
r^{u}>0,\;\;\text{or}\;\;\;V\lambda-p-\frac{\kappa_{p}}{r^{u}-p}>0
\label{eq:alternative}
\end{equation}
and it is not clear how to get any of these conditions in this generality. In the special cases above $p=0$ or $p=V\lambda$,
we do get one of these conditions.
In all these cases the function $r=r^{u,s}$ satisfies the Riccati equation
\[Fr+r^{2}-rV\lambda+\kappa=0,\]
which is easily derived using the invariance of $E^{s,u}$ and the Jacobi equation  $\ddot{y}-V(\lambda)\dot{y}+\kappa y=0$.
Observe that $h:=r-p$ satisfies the Riccati equation
\begin{equation}
Fh+h^{2}+h(2p-V\lambda)+\kappa_{p}=0.
\label{eq:ric2}
\end{equation}
Using \eqref{eq:construction2} we can also give a construction of functions $r^{u,s}$ at the level of the Riccati equation as follows. Fix $(x,v)$ and consider for each $R>0$, the unique solution $u_{R}$ to the Riccati equation
along $\phi_{t}(x,v)$
\[\dot{u}+u^2-uV\lambda+\kappa=0\]
satisfying $u_{R}(-R)=\infty$. Then \eqref{eq:construction2}  translates easily into
\begin{equation}
r^{u}(x,v)=\lim_{R\to \infty}u_{R}(0).
\label{eq:hopflimit}
\end{equation}
Note that $r^{u}(\phi_{t}(x,v))=\lim_{R\to\infty}u_{R}(t)$. These limiting solutions exist whenever the cocycle $\Psi_{t}$ has no conjugate points \cite{AD}. It is easy to check that in all the cases we consider below, the cocycle $\Psi_t$ is free of conjugate points.
\end{Remark}

\begin{Remark}{\rm This remark attempts to clarify the role of the function $p$ in terms of conjugate cocycles and infinitesimal generators as in Subsection \ref{subsection:cocycle}.  As we have already pointed out, the infinitesimal generator $\mathbb B$ for a thermostat is given by
$$
\mathbb B=\begin{pmatrix} 0 & -1 \\ \kappa & -V\lambda\end{pmatrix}.
$$
Consider a gauge transformation $\mathcal P:SM\to GL(2,\R)$ given by
\[\mathcal P=\begin{pmatrix} 1 & 0 \\ p & 1\end{pmatrix}.\]
A calculation using \eqref{eq:conjugate} shows that the conjugate cocyle $\tilde{\Psi}_{t}$ via $\mathcal{P}$ has infinitesimal generator given by
\[\tilde{\mathbb{B}}=\begin{pmatrix} -p & -1 \\ \kappa_{p} & -V\lambda+p\end{pmatrix}.\]
The cocycles $\Psi_{t}$ and $\tilde{\Psi}_{t}$ share the same dominated splitting/hyperbolicity properties by virtue of being conjugate, but the form of $\tilde{\mathbb B}$ exposes clearly the origins of these properties via $\kappa_{p}<0$ (cf. \cite[Introduction]{W3}). The trace of both matrices, which is $-V\lambda$ (minus divergence of $F$), indicates the dissipative nature of thermostats.

}
\end{Remark}

\section{Applications}

We consider now some special choices of $\lambda$. To this end let $\theta$ be a $1$-form on $M$ which we may equivalently think of as a function $\theta : SM \to \R$ satisfying $VV\theta=-\theta$. For later use we record that the co-differential of $\theta$ and its Hodge-star satisfy
\begin{equation}\label{eq:idcodiff}
\pi^*\delta_g\theta=-(X\theta+HV\theta), \qquad \pi^*(\star_g\theta)=-V(\theta)\omega_1+\theta\omega_2.
\end{equation}
Moreover, let $A$ be a differential of degree $m$ on $M$ with $m\geqslant 2$. By this we mean a section of the $m$-th tensorial power of the canonical bundle $K_{M}$ of $(M,g)$. Likewise, we may equivalently think of a differential $A$ of degree $m$ on $M$ as a real-valued function $a : SM \to \R$ satisfying $VVa=-m^2a$, explicitly, we obtain 
$$
\pi^*A=\left(Va/m+i a\right)\left(\omega_1+i \omega_2\right)^m,
$$
so that 
\begin{equation}\label{eq:normdiff}
\pi^*|A|^2_g=(Va)^2/m^2+a^2
\end{equation} 

The thermostat flows we investigate are of the form $\lambda=a-V\theta$. We will see next that they admit a dominated splitting provided a natural pair of equations is satisfied by the triple $(g,A,\theta)$. In order to derive these equations we first need a Lemma. 
\begin{Lemma}\label{lem:transnotthomasgabriel}
We have
\begin{equation}\label{eq:weakhol1}
\ov{\partial}A=\left(\frac{m-1}{2}\right)\left(\theta-i\star_g\theta\right)\otimes A\quad
\end{equation}
iff
\begin{align}
\label{eq:realcond}0&=XVa-mHa-(m-1)(\theta Va-maV\theta).
\end{align}
\end{Lemma}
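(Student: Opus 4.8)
The plan is to pass from the bundle-valued equation \eqref{eq:weakhol1} to an equation between honest functions on $SM$, expand everything into Fourier modes in the fibre variable, and verify that both \eqref{eq:weakhol1} and \eqref{eq:realcond} reduce to the \emph{same} single complex equation. Throughout I use the operators $\eta_\pm := X \pm iH$; by the commutation relations \eqref{comm} they satisfy $[V,\eta_\pm] = \mp\, i\,\eta_\pm$, so that $\eta_+$ lowers and $\eta_-$ raises by one the Fourier degree defined by $Vf = i n f$. Writing $a = a_m + a_{-m}$ for the decomposition of $a$ into its degree $\pm m$ parts (so $a_{-m} = \ov{a_m}$ since $a$ is real, and $c := Va/m + ia = 2i\,a_m$ by \eqref{eq:normdiff} together with $\pi^*A = c\,(\omega_1+i\omega_2)^m$), the whole argument is organised around the single relevant mode, of degree $m-1$.

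First I would identify the left-hand side. The key point is that, in this language, the operator $\ov{\partial}$ on $K_M^m$ is a fixed multiple of $\eta_+$ acting on the coefficient $c$. Setting $\eta := \omega_1 + i\omega_2$, the structure equations \eqref{riem1} and \eqref{riem2} give $d\eta = i\,\eta\wedge\psi$, exhibiting $\psi$ as ($i$ times) the connection form of the induced holomorphic structure. Working in an isothermal chart $g = e^{2u}|dz|^2$ with fibre angle $\varphi$, one has $\eta = e^{u}e^{-i\varphi}\,dz$ and
\[
\eta_+ = 2\,e^{-u}e^{-i\varphi}\bigl(\partial_{\bar z} - i\,u_{\bar z}\,V\bigr).
\]
Applying this to $c = f\,e^{-mu}e^{im\varphi}$, where $A = f\,dz^m$ locally, the two $u_{\bar z}$-terms cancel and one is left with $\eta_+ c \propto \partial_{\bar z} f$, i.e. $\eta_+ c$ is exactly the coefficient of $\ov{\partial}A$. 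For the right-hand side I would use \eqref{eq:idcodiff}: the function attached to $\star_g\theta$ equals $-V\theta$, so the function representing $\theta - i\star_g\theta$ is $w := \theta + iV\theta$, and one checks from $VV\theta = -\theta$ that $w$ is purely of degree $-1$. Hence $w\,c$ is purely of degree $m-1$, and \eqref{eq:weakhol1} becomes the single complex equation $\eta_+ a_m = (m-1)\,w\,a_m$ in the degree $m-1$ mode.

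Next I would expand the real equation \eqref{eq:realcond}. Substituting $X = \tfrac12(\eta_+ + \eta_-)$ and $H = \tfrac1{2i}(\eta_+ - \eta_-)$ and sorting by Fourier degree, the essential observation is that the particular combination $XVa - mHa$ has its degree $\pm(m+1)$ contributions cancel, precisely because of the coefficient $m$, leaving
\[
XVa - mHa = i m\bigl(\eta_+ a_m - \eta_- a_{-m}\bigr),
\]
which is supported only in degrees $\pm(m-1)$. An analogous computation gives $\theta\,Va - m\,a\,V\theta = 2i m\,(a_m\,w - a_{-m}\,\ov{w})$, again supported in degrees $\pm(m-1)$. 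Since \eqref{eq:realcond} is a real identity supported in those degrees, it is equivalent to the vanishing of its degree $m-1$ component alone (the degree $-(m-1)$ component being the complex conjugate), and that component is exactly $\eta_+ a_m = (m-1)\,w\,a_m$. Comparing with the reduction of \eqref{eq:weakhol1} and using $c = 2i\,a_m$ concludes the equivalence.

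The hard part will be the first step: pinning down that $\ov{\partial}$ is exactly a fixed multiple of $\eta_+$ on the coefficient $c$, with all conformal-factor and connection terms accounted for and the normalising constant correct. This is the only place where the conformal geometry enters nontrivially, and it is where sign and convention errors are most likely; the cleanest route is the isothermal-coordinate computation above, with the cancellation of the $u_{\bar z}$-terms, although one can instead argue invariantly from $d\eta = i\,\eta\wedge\psi$. The remaining work is purely algebraic once the degree-shifting rules for $\eta_\pm$ are in hand; the one point worth emphasising is that the coefficients $m$ in $XVa - mHa$ and in $\theta\,Va - m\,a\,V\theta$ are exactly what annihilate the spurious degree $\pm(m+1)$ modes, so that the \emph{single} real equation \eqref{eq:realcond} faithfully captures the complex equation \eqref{eq:weakhol1}.
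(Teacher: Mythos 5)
Your proposal is correct, and it is essentially the paper's own proof rewritten in Guillemin--Kazhdan ladder-operator language: the paper expands $\d\tilde{a}$ for $\tilde{a}=Va/m+ia=2ia_m$ in the coframe $(\omega,\ov{\omega},\psi)$, and the coefficient $\tilde{a}^{\prime\prime}$ it extracts is precisely $\tfrac{1}{2}\eta_+\tilde{a}$ with your $\eta_+=X+iH$, while the mechanism securing the ``iff'' in the paper --- Remark~\ref{rmk:imagcond}, applying $V$ to the real part to recover the imaginary part --- is exactly your observation that a real function supported in Fourier degrees $\pm(m-1)$ vanishes iff its degree-$(m-1)$ component does. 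The one genuine addition on your side is the explicit isothermal verification (with the cancellation of the $u_{\bar z}$-terms) of the identification $\pi^*(\ov{\partial}A)=\tfrac{1}{2}(\eta_+\tilde{a})\,\ov{\omega}\otimes\omega^m$, which the paper asserts structurally from the expansion of $\d\tilde{a}$; your formula for $\eta_+$ and the resulting cancellation check out.

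There is, however, one arithmetic slip you should repair, because as written it would break the very matching you need. With your definitions $w=\theta+iV\theta=2\theta_{-1}$ and $a=a_m+a_{-m}$, the degree-$(m-1)$ part of $\theta\,Va$ is $\theta_{-1}\cdot ima_m=\tfrac{im}{2}wa_m$ and that of $m\,a\,V\theta$ is $ma_m\cdot(-i\theta_{-1})=-\tfrac{im}{2}wa_m$, so the correct identity is
\begin{equation*}
\theta\,Va-m\,a\,V\theta=im\left(w\,a_m-\ov{w}\,a_{-m}\right),
\end{equation*}
not $2im\left(a_m w-a_{-m}\ov{w}\right)$. Taken at face value, your displayed coefficient would make the degree-$(m-1)$ component of \eqref{eq:realcond} read $\eta_+a_m=2(m-1)wa_m$, which does \emph{not} agree with the reduction $\eta_+a_m=(m-1)wa_m$ of \eqref{eq:weakhol1}; with the corrected factor the two components coincide exactly, which is the equation you in fact state, so the equivalence goes through. (Your other display, $XVa-mHa=im\left(\eta_+a_m-\eta_-a_{-m}\right)$, is correct, as is the cancellation of the degree-$\pm(m+1)$ modes in both expressions.)
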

\begin{Remark}\label{rmk:imagcond}
Note that applying $V$ we see that~\eqref{eq:realcond} is equivalent to
\begin{equation}\label{eq:imagcond}
0=(1-m)\left(HVa+mXa-(m-1)\left(m\theta a+V(\theta)V(a)\right)\right).
\end{equation}
\end{Remark}
\begin{proof}[Proof of Lemma~\ref{lem:transnotthomasgabriel}]
We use the complex notation $\tilde{a}=Va/m+i a$ and $\omega=\omega_1+i\omega_2$. Since $VVa=-m^2a$, we compute that there exist unique complex-valued functions $\tilde{a}^{\prime}$ and $\tilde{a}^{\prime\prime}$ so that
$$
\d \tilde{a}=\tilde{a}^{\prime}\omega+\tilde{a}^{\prime\prime}\ov{\omega}+im\tilde{a}\psi.
$$
In particular, we have $\pi^*(\ov{\partial} A)=\tilde{a}^{\prime\prime}\ov{\omega}\otimes\omega^m$. Since
\begin{align*}
\d a&=X(a)\omega_1+H(a)\omega_2+V(a)\psi,\\
\d (Va)&=X(V(a))\omega_1+H(V(a))\omega_2-m^2a\psi,
\end{align*}
we obtain
$$
\tilde{a}^{\prime\prime}=\frac{1}{2}\left(XVa/m-Ha\right)+\frac{i}{2}\left(HVa/m+Xa\right).
$$
We also have
$$
\pi^*\left(\theta-i\star_g\theta\right)=\left(\theta+i V\theta\right)\ov{\omega}.
$$
Hence~\eqref{eq:weakhol1} is equivalent to 
$$
\tilde{a}^{\prime\prime}-\left(\frac{m-1}{2}\right)(\theta+iV\theta)(V(a)/m+i a)=0.
$$
Taking the real part gives~\eqref{eq:realcond}. 
\end{proof}
\begin{Remark}
Recall that a torsion-free connection on $TM$ preserving a conformal structure $[g]$ is called a~\textit{Weyl connection} or \textit{conformal connection}. More precisely, $\nabla$ preserves $[g]$ if for some (and hence any) $g\in[g]$, there exists a $1$-form $\theta$, so that
$$
\nabla g=2\theta\otimes g.
$$
\end{Remark}

\begin{Remark}[The case $m=1$]\label{rmk:casem=1}
We could also consider the case $\lambda=a-V\theta$ with $a$ representing a differential of degree $m=1$, that is, a $(1,\! 0)$-form. We exclude this case since it corresponds to the case where $A$ vanishes identically by defining $\theta^{\prime}=Va$ and considering $\lambda^{\prime}=-V(\theta^{\prime}-\theta)=\lambda$. Flows defined by $\lambda=-V\theta=$ were studied previously under the name $W$-flows as they arise naturally by reparametrising the geodesics of a Weyl connection, see~\cite{W2}. In particular in~\cite[Theorem 5.2]{W2} it is proved that $W$-flows are Anosov provided $K_g-\delta_g\theta<0$. A simple computation gives that $\mathbb{K}=K_g-\delta_g \theta$
hence we recover~\cite[Theorem 5.2]{W2} by applying Proposition~\ref{prop:basic}. In particular, we see that if $A$ is a holomorphic $1$-form and $g$ satisfies $K_g<0$, then the associated thermostat flow is Anosov. 
\end{Remark} 
We now want to apply Theorem~\ref{thm:workhorse} to the case $\lambda=a-V\theta$  for some good choice of $p$. 
\begin{Lemma}\label{lem:wanggeneralized}
Suppose $\lambda=a-V\theta$ and take $p=Va/m+\theta$. Then $\kappa_p\equiv -1$ if and only if the following two equations are identically satisfied
\begin{align}
\label{eq:genwang}K_g&=-1+\delta_g\theta+(m-1)|A|^2_g,\\
\label{eq:weakhol}\ov{\partial}A&=\left(\frac{m-1}{2}\right)\left(\theta-i\star_g\theta\right)\otimes A.
\end{align}
\end{Lemma}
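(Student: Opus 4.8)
The plan is to compute $\kappa_p$ directly from the definitions $\kappa=K_g-H\lambda+\lambda^2$ and $\kappa_p=\kappa+Fp+p(p-V\lambda)$, substituting $\lambda=a-V\theta$, $p=Va/m+\theta$ and $F=X+\lambda V$, and then to extract the two equations from the resulting expression by separating Fourier modes along the fibres.

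First I would record the elementary consequences of $VV\theta=-\theta$ and $VVa=-m^2a$, namely $V\lambda=Va+\theta$, $Vp=-ma+V\theta$, and hence $p-V\lambda=-\tfrac{m-1}{m}Va$. Using \eqref{eq:idcodiff} to rewrite $HV\theta=-X\theta-\delta_g\theta$, I would expand the four contributions $-H\lambda$, $\lambda^2$, $Fp$ and $p(p-V\lambda)$ and collect terms. I expect the $X\theta$ and $(V\theta)^2$ contributions to cancel, and the $a^2$ and $(Va)^2$ terms to combine via \eqref{eq:normdiff} into $-(m-1)|A|^2_g$, leaving
\begin{equation*}
\kappa_p=K_g-\delta_g\theta-(m-1)|A|^2_g+\tfrac{1}{m}\left(XVa-mHa-(m-1)(\theta Va-maV\theta)\right).
\end{equation*}
The last summand is precisely $\tfrac1m$ times the right-hand side of \eqref{eq:realcond}. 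This step is routine but bookkeeping-heavy, and the cancellations producing the clean form above are the part most prone to sign errors.

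With this identity in hand, the equivalence follows by decomposing along the fibres $S_xM\cong S^1$ on which $V$ acts. Writing $\Phi:=K_g-\delta_g\theta-(m-1)|A|^2_g+1$ and $\mathcal R:=XVa-mHa-(m-1)(\theta Va-maV\theta)$, the quantity $\Phi$ is the pullback of a function on $M$ and hence $V$-invariant (fibrewise degree zero), while the commutation relations \eqref{comm} show that $X$ and $H$ shift fibrewise Fourier degree by $\pm1$. Since $a$ and $\theta$ have degrees $\pm m$ and $\pm1$, every term of $\mathcal R$ has degree in $\{\pm(m-1),\pm(m+1)\}$, all nonzero because $m\geqslant2$; thus $\mathcal R$ has vanishing fibrewise average. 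Consequently the scalar equation $\kappa_p\equiv-1$, that is $\Phi+\tfrac1m\mathcal R\equiv0$, decouples: averaging over each fibre forces $\Phi\equiv0$, which is exactly \eqref{eq:genwang}, and then $\mathcal R\equiv0$, which by Lemma~\ref{lem:transnotthomasgabriel} is equivalent to \eqref{eq:weakhol}. Conversely, if both \eqref{eq:genwang} and \eqref{eq:weakhol} hold, then $\Phi\equiv0$ and, by the Lemma, $\mathcal R\equiv0$, whence $\kappa_p\equiv-1$.

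The main obstacle is conceptual rather than computational: one must recognise that the single scalar condition $\kappa_p\equiv-1$ secretly encodes two a priori independent equations, and that these are separated exactly by the fibrewise Fourier grading. The hypothesis $m\geqslant2$ is used precisely to ensure that the $\mathcal R$-part carries no degree-zero component and so cannot interfere with the metric equation $\Phi\equiv0$; this is consistent with Remark~\ref{rmk:casem=1}, where $m=1$ is excluded because then $m-1=0$ and the two conditions would no longer be independent.
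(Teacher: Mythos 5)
Your proof is correct, and your computation of $\kappa_p$ reproduces exactly the identity at the heart of the paper's proof, with the same bookkeeping: the $X\theta$ terms cancel after rewriting $HV\theta$ via \eqref{eq:idcodiff}, the $(V\theta)^2$ contributions of $\lambda^2$ and $\lambda Vp$ cancel, and the $a^2$, $(Va)^2$ terms assemble into $-(m-1)|A|^2_g$ via \eqref{eq:normdiff}, leaving $\kappa_p=K_g-\delta_g\theta-(m-1)|A|^2_g+\tfrac1m\mathcal R$ with $\mathcal R$ the left-hand side of \eqref{eq:realcond}. The forward implication is then identical to the paper's. Where you genuinely diverge is the converse: the paper applies $V$ to $\kappa_p\equiv-1$, which annihilates the fibrewise-constant part but produces the \emph{imaginary} condition \eqref{eq:imagcond}, so it must invoke Remark~\ref{rmk:imagcond} to pass back to \eqref{eq:realcond} before Lemma~\ref{lem:transnotthomasgabriel} can be applied. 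You instead average over the fibres: since $a,Va$ have vertical Fourier degree $\pm m$, $\theta,V\theta$ degree $\pm1$, and $X,H$ shift degree by $\pm1$ by \eqref{comm}, your $\mathcal R$ lives in degrees $\pm(m-1),\pm(m+1)$, all nonzero for $m\geqslant2$, so the scalar equation decouples into $\Phi\equiv0$ and $\mathcal R\equiv0$ outright, and Lemma~\ref{lem:transnotthomasgabriel} applies directly to $\mathcal R$ without the detour through the imaginary condition. The two mechanisms are equivalent in substance — both exploit that $\kappa_p+1$ splits into a fibrewise-constant piece and a piece of nonzero vertical degree — but your version is marginally cleaner in the converse and makes explicit why $m\geqslant2$ matters: for $m=1$ the products $\theta Va$ and $aV\theta$ acquire a degree-zero component and the two equations no longer separate (your gloss on Remark~\ref{rmk:casem=1} is slightly off in attribution, as that remark excludes $m=1$ because the flow reduces to the $A=0$ case, but the observation itself is sound).
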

\begin{proof}
Taking $p=Va/m+\theta$ gives
\begin{align*}
\kappa_p&=\kappa+Fp+p(p-V\lambda)=K_g-H\lambda+\lambda^2+Fp+p(p-V\lambda)\\
&=K_g-Ha+HV\theta+a^2-2aV\theta+(V\theta)^2+(X+(a-V\theta)V)(Va/m+\theta)+p(p-V\theta)\\
&=K_g+HV\theta+X\theta-(m-1)\left(a^2+(Va)/m^2\right)+\left(XVa/m-Ha-(m-1)(\theta Va/m-aV\theta)\right)\\
&=K_g-\delta_g\theta-(m-1)|A|^2_g+\frac{1}{m}\left(XVa-mHa-(m-1)(\theta Va-maV\theta)\right),
\end{align*}
where we have used~\eqref{eq:idcodiff},~\eqref{eq:normdiff} and $VVa=-m^2 a$ as well as $VV\theta=-\theta$. 
Using Lemma~\ref{lem:transnotthomasgabriel} we see that $\kappa_p\equiv -1$ provided~\eqref{eq:genwang} and~\eqref{eq:weakhol} are identically satisfied. Conversely, suppose $\kappa_p\equiv -1$. Since $K_g-\delta_g\theta-(m-1)|A|^2_g$ is constant along the fibres of $SM \to M$, we obtain 
$$
0=V\kappa_p=\left(\frac{1-m}{m}\right)\Big(HVa+mXa-(m-1)\left(m\theta a+V(\theta)V(a)\right)\Big).
$$
Lemma~\ref{lem:transnotthomasgabriel} and Remark~\ref{rmk:imagcond} therefore imply that~\eqref{eq:weakhol} must hold. Hence we also identically have
$$
\kappa_p=-1=K_g-\delta_g\theta-(m-1)|A|^2_g, 
$$
which is equivalent to~\eqref{eq:genwang}. 
\end{proof}
Combining Theorem~\ref{thm:workhorse} and Lemma~\ref{lem:wanggeneralized} we thus immediately obtain:
\begin{Corollary}\label{cor:domsplit}
Let $(g,A,\theta)$ be a triple on $M$ satisfying~\eqref{eq:genwang} and~\eqref{eq:weakhol}. Then the associated thermostat flow admits a dominated splitting. 
\label{cor:domsplitting}
\end{Corollary}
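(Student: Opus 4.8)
The plan is to derive Corollary~\ref{cor:domsplit} as a direct consequence of Theorem~\ref{thm:workhorse}, applied to the distinguished gauge function isolated in Lemma~\ref{lem:wanggeneralized}; all the analytic work has already been carried out in that lemma, so what remains is merely to assemble the pieces.

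First I would recall that the thermostat flow associated with the triple $(g,A,\theta)$ is by definition the flow generated by $F=X+\lambda V$ with $\lambda=a-V\theta$, where $a:SM\to\R$ is the function encoding the differential $A$ (characterised by $VVa=-m^2a$) and $V\theta$ is the function on $SM$ representing the $1$-form $\theta$ (so that $VV\theta=-\theta$). The relevant dynamical quantity is $\kappa=K_g-H\lambda+\lambda^2$, together with its twisted version $\kappa_p=\kappa+Fp+p(p-V\lambda)$ that governs the hypotheses of Theorem~\ref{thm:workhorse}.

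The key step is to feed in the specific gauge $p=Va/m+\theta$. Since $(g,A,\theta)$ satisfies both~\eqref{eq:genwang} and~\eqref{eq:weakhol} by hypothesis, Lemma~\ref{lem:wanggeneralized} applies verbatim and yields $\kappa_p\equiv -1$ on all of $SM$. In particular $\kappa_p<0$ everywhere, which is precisely the hypothesis of the first assertion of Theorem~\ref{thm:workhorse}. Invoking that theorem then produces a continuous $\phi$-invariant splitting $E=E^u\oplus E^s$ that is dominated, which is the claim.

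There is no genuine obstacle in the corollary itself; the only point worth flagging is why the argument halts at a dominated splitting rather than yielding the Anosov property outright. Theorem~\ref{thm:workhorse} upgrades domination to hyperbolicity under the sharper condition $\kappa_p+\frac{(V\lambda)^{2}}{4}<0$, which with $\kappa_p\equiv -1$ reduces to the pointwise bound $(V\lambda)^2<4$, i.e.\ a bound on $V\lambda=Va+\theta$ that can fail when $A$ is large. Controlling this term is exactly the difficulty deferred to the Anosov results, where the curvature inequality $K_g<0$ supplied by Lemma~\ref{lemma:analogue_BH} is what ultimately tames the size of $A$.
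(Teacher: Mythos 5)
Your proposal is correct and matches the paper's own argument exactly: the corollary is obtained by combining Lemma~\ref{lem:wanggeneralized} (with the gauge $p=Va/m+\theta$, giving $\kappa_p\equiv -1$) with the first assertion of Theorem~\ref{thm:workhorse}. Your closing remark on why domination does not upgrade to Anosov here --- the failure of the bound $(V\lambda)^2<4$ for large $A$, deferred to Lemma~\ref{lemma:analogue_BH} --- is also consistent with the paper's discussion.
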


We also observe:
\begin{Proposition} Consider a pair $(g,A)$ with $A$ holomorphic and $K_g<0$. Then the associated thermostat flow has a dominated splitting. Moreover, for $m=2$, the flow is Anosov.
\end{Proposition}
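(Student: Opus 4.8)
The plan is to deduce both assertions from Theorem~\ref{thm:workhorse} applied with exactly the gauge function used in Lemma~\ref{lem:wanggeneralized}. Since $A$ being holomorphic means $\ov{\partial}A=0$, we are in the case $\theta=0$, so that $\lambda=a$; accordingly I would set $p=Va/m$ and compute the modified curvature $\kappa_p=\kappa+Fp+p(p-V\lambda)$.

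First I would run the computation from the proof of Lemma~\ref{lem:wanggeneralized} verbatim, but \emph{without} imposing the first of the coupled vortex equations, keeping $K_g$ arbitrary. Setting $\theta=0$ throughout, that computation collapses to
\[
\kappa_p=K_g-(m-1)|A|^2_g+\frac{1}{m}\bigl(XVa-mHa\bigr).
\]
Now holomorphicity enters: by Lemma~\ref{lem:transnotthomasgabriel} the equation $\ov{\partial}A=0$ is equivalent to~\eqref{eq:realcond}, which with $\theta=0$ reads $XVa-mHa=0$. Hence the last term drops and $\kappa_p=K_g-(m-1)|A|^2_g$. Since $m\geqslant 2$ forces $(m-1)|A|^2_g\geqslant 0$, the hypothesis $K_g<0$ gives $\kappa_p\leqslant K_g<0$ everywhere, and Theorem~\ref{thm:workhorse} immediately yields a dominated splitting.

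For the Anosov claim when $m=2$ I would verify the stronger inequality $\kappa_p+\tfrac{(V\lambda)^2}{4}<0$ required by Theorem~\ref{thm:workhorse}. Here $V\lambda=Va$, and~\eqref{eq:normdiff} gives $|A|^2_g=(Va)^2/4+a^2$. Substituting into $\kappa_p=K_g-|A|^2_g$ produces a clean cancellation of the $(Va)^2/4$ terms, leaving
\[
\kappa_p+\frac{(V\lambda)^2}{4}=K_g-a^2\leqslant K_g<0,
\]
so the flow is Anosov.

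The point I would flag as the essential obstruction is that this cancellation is special to $m=2$. Carrying out the same substitution for general $m$ gives $\kappa_p+\tfrac{(V\lambda)^2}{4}=K_g-(m-1)a^2+\bigl(\tfrac14-\tfrac{m-1}{m^2}\bigr)(Va)^2$, and the coefficient $\tfrac14-\tfrac{m-1}{m^2}$ vanishes precisely at $m=2$ and is strictly positive for $m\geqslant 3$. Thus for $m\geqslant 3$ the unfavourable $(Va)^2$ term can dominate, and the gauge $p=Va/m$ no longer certifies hyperbolicity via Theorem~\ref{thm:workhorse} alone. Upgrading the dominated splitting to Anosov in that regime is exactly what requires the more delicate cocycle conjugation and the sign control on $K_g$ advertised in the introduction, rather than the one-line argument above.
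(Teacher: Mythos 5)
Your proof is correct and takes essentially the same route as the paper: the paper's proof also uses the gauge $p=Va/m$, obtains $\kappa_p=K_g-(m-1)|A|^2_g<0$ for the dominated splitting, and for $m=2$ exploits exactly the cancellation $\kappa_p+(Va)^2/4=K_g-a^2<0$ to invoke Theorem~\ref{thm:workhorse}. Your closing observation that the coefficient $\tfrac14-\tfrac{m-1}{m^2}=\tfrac{(m-2)^2}{4m^2}$ vanishes only at $m=2$ is also accurate and correctly identifies why the general case requires the extra input of Lemma~\ref{lemma:analogue_BH}.
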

\begin{proof}The fact that there is a dominated splitting follows from $\kappa_{p}<0$.  For $m=2$ we note that 
$$
\kappa_{p}=K_g-|A|^2_g=K_g-a^{2}-(Va)^{2}/4.
$$
Thus 
$\kappa_{p}+(Va)^{2}/4<0$ and the Anosov property follows from Theorem \ref{thm:workhorse}.
\end{proof}

\subsection{Parametrising thermostat flows arising from differentials}

It turns out that the thermostat flows defined by triples $(g,A,\theta)$ satisfying~\eqref{eq:genwang} and~\eqref{eq:weakhol} can be parametrised in terms of complex geometric data. For $m\geqslant 2$ define the (smooth) complex line bundle $L_m:=\Lambda^2(TM)^{(m-1)/2}\otimes \C$. 
\begin{Lemma}\label{lem:conlinebundstruc}
There exists a canonical bijection between the following sets:
\begin{itemize}
\item[(i)] the holomorphic line bundle structures on $L_m$;
\item[(ii)] the $[g]$-conformal connections on $TM$.   
\end{itemize}
\end{Lemma}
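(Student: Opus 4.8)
The plan is to realise both sides as affine spaces (torsors) over a common real vector space of $1$-forms and to exhibit the correspondence as an affine bijection whose linear part is visibly an isomorphism. \textbf{Step 1: identify the two affine structures.} On the connection side, the torsion-free connections on $TM$ preserving $[g]$ (the Weyl connections) form an affine space modelled on $\Omega^1(M)$: fixing $g\in[g]$, every such connection is $\nabla=\nabla^g+\gamma_\theta$ for a unique real $1$-form $\theta$, where $\nabla^g$ is the Levi-Civita connection and $\gamma_\theta(X)Y=\theta(X)Y+\theta(Y)X-g(X,Y)\theta^\sharp$. On the holomorphic side, a holomorphic line bundle structure on the complex line bundle $L_m$ is the same datum as a Dolbeault operator $\ov{\partial}_{L_m}\colon\Omega^0(L_m)\to\Omega^{0,1}(L_m)$ satisfying the Leibniz rule, and any two such differ by an element of $\Omega^{0,1}(M,\mathrm{End}\,L_m)=\Omega^{0,1}(M)$; integrability is automatic because $M$ has complex dimension $1$ (there are no $(0,2)$-forms). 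Thus holomorphic structures on $L_m$ form an affine space modelled on $\Omega^{0,1}(M)$.

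\textbf{Step 2: define the map.} A conformal connection $\nabla$ on $TM$ induces by trace a connection on $\det TM=\Lambda^2 TM$, hence (by scaling the connection $1$-form by $(m-1)/2$, which is legitimate since $\Lambda^2 TM$ is trivial as an oriented real line bundle and admits roots) a connection on the real line bundle $\Lambda^2(TM)^{(m-1)/2}$, and finally a connection on $L_m$ after complexification. Its $(0,1)$-part with respect to the complex structure determined by $[g]$ is a Dolbeault operator on $L_m$, and hence defines a holomorphic line bundle structure. This assignment is manifestly canonical (no auxiliary choice enters its definition) and is affine, since both the trace induction and the $(0,1)$-projection depend affinely on $\nabla$.

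\textbf{Step 3: compute the linear part and conclude.} Writing $\nabla=\nabla^g+\gamma_\theta$, a short trace computation shows that the induced connection on $\det TM$ changes by $\operatorname{tr}\gamma_\theta(\cdot)=2\theta$; consequently the induced connection on $L_m$ changes by $(m-1)\theta$ and its $(0,1)$-part by $(m-1)\theta^{0,1}$. Hence the linear part of our affine map is
\[
\Omega^1(M)\longrightarrow\Omega^{0,1}(M),\qquad \theta\longmapsto (m-1)\,\theta^{0,1}.
\]
For $m\geqslant 2$ this is a real-linear isomorphism: the map $\theta\mapsto\theta^{0,1}$ sending a real $1$-form to its $(0,1)$-part is injective (if $\theta^{0,1}=0$ then $\theta^{1,0}=\ov{\theta^{0,1}}=0$, so $\theta=0$) and surjective (with inverse $\beta\mapsto\beta+\ov{\beta}$), and the scalar $(m-1)$ is nonzero. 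An affine map between nonempty torsors whose linear part is an isomorphism is a bijection, which is the desired canonical identification.

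The genuinely conceptual move is the reduction in Steps 1--2 to an affine map of torsors over $\Omega^1(M)$; once that is in place the result is forced. \textbf{The main obstacle} I expect is the honest bookkeeping of the factor in Step 3 -- verifying $\operatorname{tr}\gamma_\theta=2\theta$ and then tracking it correctly through the half-integer power $(m-1)/2$ and the complexification so that it emerges as the nonzero scalar $(m-1)$ -- together with confirming that Weyl connections really do form an $\Omega^1(M)$-torsor and that the $(0,1)$-projection of a real connection form recovers the isomorphism $\theta\mapsto\theta^{0,1}$.
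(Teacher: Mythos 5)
Your proof is correct, but it takes a genuinely different route from the paper's. The paper argues concretely in both directions: given $\ov{\partial}_{L_m}$, it differentiates the canonical non-vanishing section $(\det g)^{-(m-1)/4}$ of $L_m$ to extract a unique real $1$-form $\theta$, checks that replacing $g$ by $\mathrm{e}^{2u}g$ replaces $\theta$ by $\theta+\d u$ (so the recipe really produces a Weyl structure, i.e.\ a conformal connection), gets injectivity from the fact that two Dolbeault operators agreeing on one non-vanishing section agree everywhere, and gets surjectivity by complexifying ${}^{(g,\theta)}\nabla$ and invoking the Koszul--Malgrange theorem (citing Kobayashi) to realise its $(0,\!1)$-part as a holomorphic structure, finally verifying that the induced Weyl structure is $[g,\theta]$. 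You instead exhibit both sets as affine spaces -- Weyl connections over $\Omega^1(M)$, Dolbeault operators over $\Omega^{0,1}(M)$, with integrability vacuous in complex dimension one -- and show the natural map $\nabla\mapsto\nabla^{(0,1)}$ on $L_m$ is affine with linear part $\theta\mapsto(m-1)\theta^{0,1}$ (up to sign), a real-linear isomorphism, so bijectivity is automatic; this spares you both the explicit construction of an inverse and the mutual-inverseness check, while the conformal-change covariance that the paper verifies by hand is absorbed into the canonicity of your trace construction. What the paper's more computational route buys is the explicit formula relating $\ov{\partial}_{L_m}$, $\theta$ and $(\det g)^{-(m-1)/4}$, which is exactly what gets reused in the subsequent parametrisation of triples $(g,A,\theta)$. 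Three minor points: your sign convention for $\gamma_\theta$ is opposite to the paper's (there ${}^{(g,\theta)}\nabla g=2\,\theta\otimes g$), which flips the linear part to $-(m-1)\theta^{0,1}$ but affects nothing; your passage through the half-integer power $(m-1)/2$ is legitimate and can be made painless by noting that $L_m$ is smoothly trivialised by the global section $(\det g)^{-(m-1)/4}$, which is precisely how the paper sidesteps the root issue; and if ``holomorphic line bundle structure'' were read as a holomorphic atlas rather than a Dolbeault operator you would still need the (easy, line-bundle case of the) Koszul--Malgrange step the paper cites -- but since the paper's own forward direction manifestly treats holomorphic structures as Dolbeault operators, your identification is consistent with its usage.
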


Before we prove Lemma~\ref{lem:conlinebundstruc}, we first recall some basic facts about conformal connections. Let us fix a Riemannian metric $g \in [g]$. It follows from Koszul's identity that the $[g]$-conformal connections are of the form 
$$
{}^{(g,\theta)}\nabla={}^g\nabla+g\otimes \theta^{\sharp}-\theta\otimes \mathrm{Id}-\mathrm{Id}\otimes \theta
$$
where $\theta \in \Omega^1(M)$, ${}^g\nabla$ denotes the Levi-Civita connection of $g$ and $\theta^{\sharp}$ the $g$-dual vector field of $\theta$. Moreover, for $u \in C^{\infty}(M)$, we have~\cite[Theorem 1.159]{Besse}
$$
{}^{\exp(2u)g}\nabla={}^g\nabla-g\otimes{}^g\nabla u+\d u\otimes \mathrm{Id}+\mathrm{Id}\otimes \d u
$$
from which one easily computes
$$
{}^{(\exp(2u)g,\theta+\d u)}\nabla={}^{(g,\theta)}\nabla.
$$
Since ${}^{(g,\theta)}\nabla g=2\,\theta\otimes g$ and ${}^{(g,\theta)}\nabla \mathrm{e}^{2u}g=2\,(\theta+\d u)\otimes \mathrm{e}^{2u}g$, we conclude that the $[g]$-conformal connections are in one-to-one correspondence with~\textit{Weyl structures}, where by a Weyl structure we mean an equivalence class $[g,\theta]$ subject to the equivalence relation
$$
(g,\theta)\sim (\hat{g},\hat{\theta})\quad \iff \quad \hat{g}=\mathrm{e}^{2u}g \;\; \text{and}\;\; \hat{\theta}=\theta+\d u
$$
for $u \in C^{\infty}(M)$. For later usage we also record that the symmetric part of the Ricci curvature of ${}^{(g,\theta)}\nabla$ satisfies
$$
\mathrm{Sym}\,\mathrm{Ric}\left({}^{(g,\theta)}\nabla\right)=\left(K_g-\delta_g\theta\right)g. 
$$

\begin{proof}[Proof of Lemma~\ref{lem:conlinebundstruc}]
Let $\ov{\partial}_{L_m} : \Gamma(M,L_m) \to \Omega^{0,1}(M,L_m)$ be a holomorphic line bundle structure on $L_m$. Observe that $(\det g)^{-(m-1)/4}$ is a non-vanishing section of $L_m$, hence 
$$
(\det g)^{(m-1)/4}\otimes\ov{\partial}_{L_m}(\det g)^{-(m-1)/4}
$$
is a $(0,\! 1)$-form on $M$. Thus there exists a unique $1$-form $\theta$ on $M$ so that
$$
\ov{\partial}_{L_m}(\det g)^{-(m-1)/4}=-\left(\frac{m-1}{2}\right)\left(\theta-i\star_g\theta\right)\otimes (\det g)^{-(m-1)/4}.
$$
If we instead consider the metric $\hat{g}=\mathrm{e}^{2u}g$ for $u \in C^{\infty}(M)$, then we obtain
$$
\ov{\partial}_{L_m}(\det \hat{g})^{-(m-1)/4}=-\left(\frac{m-1}{2}\right)\left(\hat{\theta}-i\star_g\hat{\theta}\right)\otimes (\det \hat{g})^{-(m-1)/4}
$$
with $\hat{\theta}=\theta+\d u$. It follows that $\ov{\partial}_{L_m}$ defines a Weyl structure on $M$. Moreover, if two holomorphic line bundle structures $\ov{\partial}_{L_m}$ and $\ov{\partial}^{\prime}_{L_m}$ on $L_m$ determine the same Weyl structure $[g,\theta]$, then they satisfy
$$
\ov{\partial}_{L_m}(\det g)^{-(m-1)/4}=\ov{\partial}^{\prime}_{L_m}(\det g)^{-(m-1)/4}
$$
and hence also $\ov{\partial}_{L_m}=\ov{\partial}^{\prime}_{L_m}$. 

Conversely, let ${}^{(g,\theta)}\nabla$ be a $[g]$-conformal connection, then
$$
{}^{(g,\theta)}\nabla \left(\det g\right)^{-(m-1)/4}=-\left(m-1\right)\theta\otimes \left(\det g\right)^{-(m-1)/4}. 
$$
Extending ${}^{(g,\theta)}\nabla$ complex linearly, we obtain a  connection on the complex line bundle $L_m$ whose curvature form is (since $\dim_{\C} M=1$) an $\mathrm{End}(L_m)$-valued $(1,\! 1)$-form on $M$. Thus, standard results imply (c.f.~\cite[Prop.~1.3.7]{Koba}) that there exists a unique holomorphic line bundle structure $\ov{\partial}_{L_m}$ on $L_m$ so that $\ov{\partial}_{L_m}={}^{(g,\theta)}\nabla^{(0,1)}$. Finally, we have
\begin{align*}
{}^{(g,\theta)}\nabla^{(0,1)}\left(\det g\right)^{-(m-1)/4}&=-\left(\frac{m-1}{2}\right)\left(\theta-i\star_g\theta\right)\otimes (\det g)^{-(m-1)/4}\\
&=\ov{\partial}_{L_m}(\det g)^{-(m-1)/4}.
\end{align*}
Therefore, the Weyl structure determined by $\ov{\partial}_{L_m}$ is $[g,\theta]$, thus proving the claim. 
 \end{proof}
 
 Given a section $P$ of $L_{m}\otimes K_{M}^{m}$ we can define
 \[|P|_{g}^2:=|A|^{2}_{g}\]
 where $A:=\left(\det g\right)^{(m-1)/4}\otimes P$. It is straightforward to check that the quadratic form
 \[\mathbb{P}:=|P|^{2}_{g}g\]
 only depends on $[g]$.
 
We now have:
\begin{Proposition}
Let $m\geqslant 2$. On a compact oriented surface $M$ with $\chi(M)<0$ the following sets are in one-to-one correspondence: 
\begin{itemize}
\item[(i)] the triples $(g,A,\theta)$ consisting of a Riemannian metric $g$, a differential $A$ of degree $m$ and a $1$-form $\theta$ such that 
$$
K_g=-1+\delta_g\theta+(m-1)|A|^2_g\quad \text{and}\quad \ov{\partial} A=\left(\frac{m-1}{2}\right)\left(\theta-i\star\theta\right)\otimes A;
$$
\item[(ii)] the triples $([g],\ov{\partial}_{L_m},P)$ consisting of a conformal structure $[g]$, a holomorphic line bundle structure $\ov{\partial}_{L_m}$ on $L_m$ and a holomorphic section $P$ of $L_m\otimes K_M^m$ having the property that the symmetric part of the Ricci curvature of the conformal connection associated to $\ov{\partial}_{L_m}$ plus $(1-m)\mathbb{P}$ is negative definite.
\end{itemize}
\end{Proposition}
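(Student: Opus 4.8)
The plan is to assemble the correspondence directly from Lemma~\ref{lem:conlinebundstruc} together with the two invariant tensors occurring in the statement, the guiding principle being that equation~\eqref{eq:weakhol} is exactly the holomorphicity of $P$, while equation~\eqref{eq:genwang} is exactly the normalisation that turns the negative-definite curvature tensor into $-g$. Throughout I keep in mind the two recorded facts: the identity $\mathrm{Sym}\,\mathrm{Ric}\!\left({}^{(g,\theta)}\nabla\right)=(K_g-\delta_g\theta)g$, and $\mathbb{P}=|A|^2_g\,g$ with $A=(\det g)^{(m-1)/4}\otimes P$, so that $|P|^2_g=|A|^2_g$.

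First I would treat the direction (i)$\to$(ii). Given $(g,A,\theta)$ solving~\eqref{eq:genwang} and~\eqref{eq:weakhol}, I retain the conformal structure $[g]$ and pass to the Weyl structure $[g,\theta]$, which by Lemma~\ref{lem:conlinebundstruc} produces a holomorphic line bundle structure $\ov{\partial}_{L_m}$ on $L_m$. I then set $P:=(\det g)^{-(m-1)/4}\otimes A$, a smooth section of $L_m\otimes K_M^m$. To check that $P$ is holomorphic I apply the Leibniz rule to $\ov{\partial}_{L_m\otimes K_M^m}P$, splitting it as $\bigl(\ov{\partial}_{L_m}(\det g)^{-(m-1)/4}\bigr)\otimes A+(\det g)^{-(m-1)/4}\otimes \ov{\partial}A$. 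The first summand is evaluated using the identity $\ov{\partial}_{L_m}(\det g)^{-(m-1)/4}=-\frac{m-1}{2}(\theta-i\star_g\theta)\otimes(\det g)^{-(m-1)/4}$ established inside the proof of Lemma~\ref{lem:conlinebundstruc}; after factoring out $(\det g)^{-(m-1)/4}$ one sees that $\ov{\partial}P=0$ holds iff~\eqref{eq:weakhol} holds. For the curvature condition I combine the two recorded facts to get $\mathrm{Sym}\,\mathrm{Ric}\!\left({}^{(g,\theta)}\nabla\right)+(1-m)\mathbb{P}=\bigl(K_g-\delta_g\theta-(m-1)|A|^2_g\bigr)g$, whose scalar factor equals $-1$ by~\eqref{eq:genwang}; hence this tensor equals $-g$ and is negative definite.

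For the reverse direction (ii)$\to$(i) the crucial observation is that the tensor $S:=\mathrm{Sym}\,\mathrm{Ric}\!\left({}^{(g,\theta)}\nabla\right)+(1-m)\mathbb{P}$ is intrinsic: the Ricci curvature depends only on the conformal connection, hence only on $\ov{\partial}_{L_m}$, and $\mathbb{P}$ depends only on $[g]$. Writing $S=f\,g_0$ for a reference representative $g_0\in[g]$ (the conformal structure makes $M$ a Riemann surface, so $K_M$ is holomorphic and the holomorphicity of $P$ is meaningful), negative definiteness forces $f<0$, so $-S=(-f)g_0$ is again a metric in $[g]$. I therefore define $g:=-S$; this is the sole ``algebraic'' step, a pointwise rescaling with no PDE involved. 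Relative to this $g$ I let $\theta$ be the unique $1$-form making $(g,\theta)$ represent the Weyl structure attached to $\ov{\partial}_{L_m}$, and set $A:=(\det g)^{(m-1)/4}\otimes P$. Because $S$ is intrinsic, evaluating it against this representative gives $\bigl(K_g-\delta_g\theta-(m-1)|A|^2_g\bigr)g=S=-g$, which is~\eqref{eq:genwang}; and holomorphicity of $P$ yields~\eqref{eq:weakhol} by the same Leibniz computation run backwards.

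Finally I would verify that the two assignments are mutually inverse, the only delicate point being that, starting from a solution $(g,A,\theta)$, equation~\eqref{eq:genwang} forces $S=-g$, so $-S$ returns the original metric and with it the original $\theta$ and $A$. I expect the main obstacle to be precisely this normalisation in the reverse direction: one must recognise that negative definiteness of $S$ is not a bare sign condition but, since $S$ is conformally a multiple of the metric, can be upgraded to the exact equality $S=-g$ by the single rescaling $g=-S$. This is what replaces the quasi-linear elliptic PDE one might otherwise anticipate and is what makes the assignment an honest bijection rather than a correspondence only up to a conformal factor.
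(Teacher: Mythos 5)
Your argument is correct, and in the direction (i)$\to$(ii) it coincides with the paper's proof essentially verbatim: the holomorphic structure attached to the Weyl structure $[g,\theta]$ via Lemma~\ref{lem:conlinebundstruc}, the Leibniz computation showing that $\ov{\partial}P=0$ is equivalent to \eqref{eq:weakhol}, and the identity $\mathrm{Sym}\,\mathrm{Ric}\left({}^{(g,\theta)}\nabla\right)+(1-m)\mathbb{P}=\left(K_g-\delta_g\theta-(m-1)|A|^2_g\right)g=-g$. In the reverse direction, however, you take a genuinely different and arguably slicker route. The paper fixes the hyperbolic representative $g_0\in[g]$ (uniformisation), lets $\theta_0$ be the Weyl form paired with $g_0$ by $\ov{\partial}_{L_m}$, sets $A_0=(\det g_0)^{(m-1)/4}\otimes P$, makes the ansatz $g=\mathrm{e}^{2u}g_0$, $\theta=\theta_0+\d u$, $A=\mathrm{e}^{(m-1)u}A_0$, and observes that the Laplacian terms cancel, leaving the algebraic equation $\mathrm{e}^{2u}=1+\delta\theta_0+(m-1)|A_0|^2$, uniquely solvable exactly under the negative-definiteness hypothesis. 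You instead exploit that $S:=\mathrm{Sym}\,\mathrm{Ric}+(1-m)\mathbb{P}$ is a conformally invariant tensor which is pointwise a multiple of the conformal class (both facts are recorded in the paper just before the proposition) and simply define $g:=-S$. The two constructions produce the same metric: evaluating $S$ in the representative $(g_0,\theta_0)$ gives $-S=\left(1+\delta\theta_0+(m-1)|A_0|^2\right)g_0=\mathrm{e}^{2u}g_0$, so your definition is the paper's algebraic equation solved invariantly. What your route buys: it makes the purely algebraic (non-PDE) nature of the inversion structurally transparent, it shows the uniformisation theorem is not actually needed (any representative works, since $-K_{g_0}+\delta\theta_0+(m-1)|A_0|^2$ is precisely the conformal factor), and your mutual-inverse check, resting on the observation that \eqref{eq:genwang} says exactly $S=-g$, is cleaner than the paper's brief injectivity remark. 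What the paper's route buys is an explicit exhibition of the conformal factor, in the style of the conformal-change computations it reuses in the holomorphic case. One dependency worth flagging: the identity $\ov{\partial}_{L_m}(\det g)^{-(m-1)/4}=-\frac{m-1}{2}\left(\theta-i\star_g\theta\right)\otimes(\det g)^{-(m-1)/4}$ that powers your ``Leibniz computation run backwards'' comes from the \emph{proof} of Lemma~\ref{lem:conlinebundstruc}, not merely its statement, so you are implicitly invoking the lemma's construction and not just the bijection it asserts.
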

\begin{proof}
Suppose $(g,A,\theta)$ is a triple satisfying
$$
K_g=-1+\delta_g\theta+(m-1)|A|^2_g \quad \text{and}\quad \ov{\partial}A=\left(\frac{m-1}{2}\right)\left(\theta-i\star_g\theta\right)\otimes A.
$$
We equip $L_m$ with the holomorphic line bundle structure induced by the conformal connection ${}^{(g,\theta)}\nabla$. Define $P:=\left(\det g\right)^{-(m-1)/4}\otimes A$, then $P$ is a holomorphic section of $L_m\otimes K_M^m$. Indeed, we compute
\begin{align*}
\ov{\partial}P&=\ov{\partial}_{L_m}\left((\det g)^{-(m-1)/4}\right)\otimes A+\left(\det g\right)^{-(m-1)/4}\otimes \ov{\partial}_{K_M}A\\
&=-\left(\frac{m-1}{2}\right)\left(\theta-i\star_g\theta\right)\otimes P+\left(\frac{m-1}{2}\right)\left(\theta-i\star_g\theta\right)\otimes P\\
&=0.
\end{align*}
In addition, we observe that the symmetric part of the Ricci curvature of ${}^{(g,\theta)}\nabla$ satisfies
$$
\mathrm{Sym}\;\mathrm{Ric}\left({}^{(g,\theta)}\nabla\right)+(1-m)\mathbb{P}=\left(K_g-\delta_g\theta+(1-m)|A|_{g}^{2}\right)g=-g
$$
which is obviously negative definite. Clearly, the just described map from the first set of triples into the second set of triples is injective. 

Conversely, suppose $L_m$ is equipped with a holomorphic line bundle structure $\ov{\partial}_{L_m}$ and let $P$ be a holomorphic section of $L_m\otimes K_M^m$. Assume furthermore that the symmetric part of the Ricci curvature of the conformal connection associated to $\ov{\partial}_{L_m}$ plus $(1-m)\mathbb{P}$ is negative definite. We will next use these data to construct a triple $(g,A,\theta)$ solving the above equations. Let $g_0 \in [g]$ denote the hyperbolic metric in the conformal equivalence class and define
$$
A_0:=\left(\det g_0\right)^{(m-1)/4}\otimes P.
$$
Note that $(\det g_0)^{(m-1)/4}$ is a non-vanishing section of $L_m^{-1}$ and hence $A_0$ is a section of $K_M^m$. Since $P$ is holomorphic it follows that there exists a unique $1$-form $\theta_0$ on $M$ such that
$$
\ov{\partial}A_0=\left(\frac{m-1}{2}\right)\left(\theta_0-i\star\theta_0\right)\otimes A_0. 
$$
Now make the Ansatz $g=\mathrm{e}^{2u}g_0$ for $u \in C^{\infty}(M)$ and $A=\left(\det g\right)^{(m-1)/4}\otimes P=A_0\mathrm{e}^{u(m-1)}$. Then
$$
\ov{\partial}A=\left(\frac{m-1}{2}\right)\left(\theta-i\star\theta\right)\otimes A,
$$
where $\theta=\theta_0+\d u$. Since
\begin{equation}\label{eq:gaussconfchange}
K_{\exp(2u)g}=\mathrm{e}^{-2u}\left(K_g-\Delta_g u\right),
\end{equation}
where $\Delta_g=-\left(\delta_{g}\d+\d\delta_{g}\right)$, we obtain
$$
\mathrm{e}^{-2u}\left(-1-\Delta u\right)=-1+\mathrm{e}^{-2u}\delta\left(\theta_0+\d u\right)+(m-1)\mathrm{e}^{-2u}|A_0|^2,
$$
where now all norms and operators are with respect to $g_0$. This simplifies to become an algebraic equation for $u$
$$
\mathrm{e}^{2u}-(m-1)|A_0|^2=1+\delta \theta_0.
$$
Clearly, this equation uniquely determines $u$ provided $1+\delta\theta_0+(m-1)|A_{0}|^{2}$ is positive. Note that this happens if and only if 
$$
(-1-\delta\theta_0+(1-m)|A_{0}|^{2})g_0=\mathrm{Sym}\,\mathrm{Ric}\left({}^{(g_0,\theta_0)}\nabla\right)+(1-m)\mathbb{P}
$$ is negative definite, but ${}^{(g_0,\theta_0)}\nabla$ is just the conformal connection induced by $\ov{\partial}_{L_m}$. Finally, by construction, the triple associated to $(g,A,\theta)$ is $([g],\ov{\partial}_{L_m},P)$.  
\end{proof}

\begin{Remark}[W-Flows]
The W-Flows of Wojtkowski~\cite{W2} are also covered by the thermostat flows defined by triples $(g,A,\theta)$ satisfying~\eqref{eq:genwang} and \eqref{eq:weakhol} in the case where the conformal connection ${}^{(g,\theta)}\nabla$ defining the W-flow has negative definite symmetric Ricci curvature, that is, satisfies $(K_g-\delta_g\theta)<0$. Indeed, suppose the pair $(g,\theta)$ satisfies $(K_g-\delta_g\theta)<0$. Let 
$
u=\frac{1}{2}\ln\left(\delta_g\theta-K_g\right)
$
and consider $(\hat{g},\hat{\theta})=(\mathrm{e}^{2u}g,\theta+\d u)$. Then the pairs $(g,\theta)$ and $(\hat{g},\hat{\theta})$ define the same conformal connection and hence equivalent W-flows. Using ~\eqref{eq:gaussconfchange} and the identity $\delta_{\exp(2u)g}=\mathrm{e}^{-2u}\delta_g$ for the co-differential acting on $1$-forms, we compute
$$
K_{\hat{g}}-\delta_{\hat{g}}\hat{\theta}=\left(\frac{1}{\delta_g\theta-K_g}\right)\left(K_g-\Delta_g u\right)-\left(\frac{1}{\delta_g\theta-K_g}\right)\delta_g\left(\theta+\d u\right)=-1. 
$$
Hence the triple $(\hat{g},0,\hat{\theta})$ satisfies~\eqref{eq:genwang} and \eqref{eq:weakhol}. In particular, we see that the geodesic flow of metrics of negative Gauss curvature also fit into our family of flows. 
\end{Remark}

\section{The case of holomorphic differentials}

We have seen that a triple $(g,A,\theta)$ solving~\eqref{eq:genwang} and~\eqref{eq:weakhol} yields a holomorphic section of $L_m\otimes K_M^m$ with respect to some appropriate holomorphic line bundle structure on $L_m$. We now restrict to the case where the differential $A$ is already holomorphic so that we obtain the coupled vortex equations
$$
K_g=-1+(m-1)|A|^2_g\quad \text{and} \quad \ov{\partial} A=0.
$$
\subsection{Anosov flows}
It is possible to upgrade Corollary \ref{cor:domsplit} in the case where $A$ is holomorphic as follows:

\begin{Theorem}\label{thm:anosovholdif} Let $(g,A)$ be a pair satisfying the coupled vortex equations $\bar{\partial}A=0$ and
$K_g=-1+(m-1)|A|^{2}_{g}$. Then the associated thermostat flow is Anosov.
\label{thm:upgradeanosov}
\end{Theorem}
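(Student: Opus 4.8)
The plan is to upgrade the dominated splitting that these flows already possess to genuine hyperbolicity, the decisive new ingredient being the negativity of the Gauss curvature. Since here $\theta\equiv 0$ and $\lambda=a$, applying Lemma~\ref{lem:wanggeneralized} with $p=Va/m$ gives $\kappa_p\equiv -1$, so Corollary~\ref{cor:domsplit} already furnishes a continuous invariant splitting $E=E^u\oplus E^s$; the task is to promote it to an Anosov splitting. Observe first that the cheap Anosov criterion of Theorem~\ref{thm:workhorse}, namely $\kappa_p+\tfrac{(V\lambda)^2}{4}<0$, reads here $(Va)^2<4$, which is available only for $m=2$ (recovering the case treated earlier). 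For $m\geqslant 3$ a genuinely new input is required.

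The structural observation is that, in the gauge $p=Va/m$, the conjugate infinitesimal generator $\tilde{\mathbb B}$ computed at the end of Section~\ref{subsection:cocycle} becomes \emph{symmetric}:
\[
S:=-\tilde{\mathbb B}=\begin{pmatrix} Va/m & 1 \\ 1 & (m-1)Va/m\end{pmatrix},\qquad \det S=\frac{(m-1)(Va)^2}{m^2}-1 .
\]
The curvature bound $K_g<0$ (the content of Lemma~\ref{lemma:analogue_BH}) is exactly what controls this determinant: since $|A|^2_g=(Va)^2/m^2+a^2\geqslant (Va)^2/m^2$, the equation $K_g=-1+(m-1)|A|^2_g<0$ forces the pointwise bound $(Va)^2<m^2/(m-1)$, i.e. $\det S<0$. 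By compactness of $SM$ this is uniform, $\det S\leqslant -c_0<0$, so $S$ has one eigenvalue $\mu_1>0$ and one $\mu_2<0$, both bounded away from $0$ because $\mu_1\mu_2=\det S$ while $\mu_1-\mu_2=\sqrt{(Va)^2(m-2)^2/m^2+4}$ is bounded.

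From here I would conclude the Anosov property by showing that the stable and unstable Green solutions of the Riccati equation (the functions $r^{u},r^{s}$ of Remark~\ref{rem:upgrade}, which exist because the cocycle is free of conjugate points) satisfy $r^{u}>0>r^{s}$. The sum of these rates is the divergence $\mathrm{tr}\,S=Va$ up to a coboundary, so $\int(r^u+r^s)\,d\mu=\int Va\,d\mu$ for every flow-invariant probability measure $\mu$, while domination forces $\int(r^u-r^s)\,d\mu\geqslant c>0$; the bound $(Va)^2<m^2/(m-1)$ is \emph{precisely} the threshold making the larger eigenvalue $\mu_1=\tfrac12\big(Va+\sqrt{(Va)^2(m-2)^2/m^2+4}\,\big)$ of $S$ positive and $\mu_2$ negative, and transferring these signs to the Green rates yields $r^u>0>r^s$. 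Once this is in hand the splitting is hyperbolic; equivalently it shows every closed orbit is a hyperbolic saddle, so that the dominated splitting together with \cite[Theorem~B]{AR-H} rules out the normally hyperbolic tori and gives $SM=\Lambda$.

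The main obstacle is exactly the passage from ``$\det S<0$ pointwise'' to \emph{uniform} hyperbolicity. Because the flow is dissipative ($\mathrm{tr}\,S=Va\not\equiv 0$), the instantaneous hyperbolic frame of $S$ rotates at a rate governed by $F(Va)=mHa-m^2a^2$, and near the extremal regime $(Va)^2\uparrow m^2/(m-1)$ this rotation can swamp the hyperbolicity strength, which is of order $|\det S|=1-(m-1)(Va)^2/m^2$. Consequently one cannot simply verify $\dot Q>0$ on all of $E$ for a fixed quadratic form as in the $m=2$ case, and the real work is to control the accumulated dissipation $\int_0^t Va$ against the domination gap. It is for this control that the curvature bound $K_g<0$ is indispensable, supplying the sharp pointwise estimate that keeps the two Green rates on opposite sides of zero for every invariant measure.
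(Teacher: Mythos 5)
Your setup correctly reproduces the paper's strategy up to the decisive step: the gauge $p=Va/m$ with $\kappa_p\equiv -1$, the dominated splitting, the key input $K_g<0$ from Lemma~\ref{lemma:analogue_BH} with its consequence $(Va)^2<m^2/(m-1)$, and the goal, as in Remark~\ref{rem:upgrade}, of proving $r^u>0>r^s$. Your observation that the conjugated generator is symmetric with $\det S=(m-1)(Va)^2/m^2-1<0$ is correct and is a nice repackaging of that same bound. But there is a genuine gap exactly where you write that ``transferring these signs to the Green rates yields $r^u>0>r^s$'': no argument is given, and the measure-theoretic bookkeeping you sketch cannot supply one. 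From $\int(r^u+r^s)\,d\mu=\int Va\,d\mu$ and $\int(r^u-r^s)\,d\mu\geqslant c>0$ you can only conclude $\int r^u\,d\mu\geqslant\tfrac{1}{2}\left(\int Va\,d\mu+c\right)$, and since $\int Va\,d\mu$ may a priori be as negative as roughly $-m/\sqrt{m-1}$ while $c$ is an unquantified domination constant, no sign conclusion follows — not even hyperbolicity of closed orbits, which your appeal to \cite[Theorem B]{AR-H} would require. Your closing paragraph concedes that controlling the accumulated dissipation $\int_0^t Va$ against the domination gap is ``the real work''; that work is precisely the proof, and it is missing.

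The paper closes this exact gap with a scalar Riccati comparison that is immune to the eigenframe-rotation problem you worry about. Setting $h:=r^u-Va/m$, equation \eqref{eq:ric2} becomes $Fh+h^2+hB-1=0$ with $B=\frac{2-m}{m}Va$. Along any orbit one considers the solutions $h_R$ with $h_R(-R)=\infty$, whose limit computes $r^u$ as in \eqref{eq:hopflimit}. With $c:=\max|B|$ and $\ell:=\frac{\sqrt{c^2+4}-c}{2}$ (the positive root of $\ell^2+c\ell-1=0$), the function $f_R:=h_R-\ell$ satisfies a linear equation $\dot f+wf=q$ with $q=1-\ell^2-B\ell=\ell(c-B)\geqslant 0$, whence $h_R\geqslant\ell$ and, in the limit, $r^u\geqslant\ell+Va/m$. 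The numerics from Lemma~\ref{lemma:analogue_BH} are then exactly sharp: $(Va)^2<m^2/(m-1)$ gives $c<(m-2)/\sqrt{m-1}$ and $Va/m>-1/\sqrt{m-1}$, and since $\ell$ evaluated at $c=(m-2)/\sqrt{m-1}$ equals $1/\sqrt{m-1}$, one obtains $r^u>0$ (and symmetrically $r^s<0$), giving the first alternative in \eqref{eq:alternative} and hence uniform exponential growth on $E^u$. Note that this barrier argument is pointwise in time along each orbit and never needs to estimate $\int_0^t Va$: the monotone structure of the scalar Riccati flow absorbs the rotation of the instantaneous eigenframe that defeats any fixed quadratic form, which is why your (correct) spectral analysis of $S$ alone cannot be upgraded to uniform hyperbolicity without it.
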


\begin{proof} We already know that there is a dominated splitting, so taking into account Remark \ref{rem:upgrade}, the strategy will be to show that $r^{u}>0$ and $r^{s}<0$. We will do this using the following lemma.

\begin{Lemma} Let $(g,A)$ be a pair satisfying the coupled vortex equations $\bar{\partial}A=0$ and
$K_g=-1+(m-1)|A|^{2}_{g}$. Then $-1\leqslant K_g<0$.
\label{lemma:analogue_BH}
\end{Lemma}

\begin{proof} The proof is quite similar to the proof of \cite[Proposition 3.3]{BH}, the reader may also compare with~\cite[Theorem 5.1]{DM}. The claim is obviously correct if $A$ vanishes identically, hence we assume this not to be the case. We first prove the inequality $K_g\leqslant 0$. As before let $g_0$ denote the hyperbolic metric in the conformal equivalence class of $g$ and write $g=\mathrm{e}^{2u}g_0$ for $u \in C^{\infty}(M)$. Using 
\begin{equation}\label{eq:confchangeids}
K_g=\mathrm{e}^{-2u}\left(-1-\Delta u\right)\quad \text{and}\quad |A|^2_{g}=\mathrm{e}^{-2mu}{|A|^2_{g_0}}
\end{equation} gives
\begin{equation}\label{eq:pdeconffactor}
1+\Delta u=\mathrm{e}^{2u}-(m-1)\mathrm{e}^{-2(m-1)u}\alpha,
\end{equation}
where we write $\alpha=|A|^2_{g_0}$. The inequality $K_g\leqslant 0$ is equivalent to
\begin{equation}\label{eq:gausscurvinequality}
(m-1)\mathrm{e}^{-2mu}\alpha \leqslant 1
\end{equation}
and is clearly satisfied at the points where $A$ vanishes. Therefore, taking the logarithm of~\eqref{eq:gausscurvinequality}, we see that $K_g\leqslant 0$ follows from the non-negativity of the smooth function
$$
f=2mu-\log(m-1)-\log \alpha,
$$
which is defined on the open set $M^{\circ}:=\left\{x \in M : A(x)\neq 0\right\}$. Note that using $f$ the equation~\eqref{eq:pdeconffactor} becomes
\begin{equation}\label{eq:pdeconffactor2}
1+\Delta u=\mathrm{e}^{2u}(1-\mathrm{e}^{-f}). 
\end{equation}
As $M$ is compact, the Gauss curvature $K_g$ attains its maximum at some point $x_0$ and moreover $x_0 \in M^{\circ}$. Consequently, the function $f$ attains its infimum at $x_0$. A straightforward calculation gives $\Delta\log \alpha=-2m$, where we use that $A$ is holomorphic. At the minimum $x_0$ of $f$ we thus obtain
\begin{equation}\label{eq:deltaf}
0\leqslant \Delta f(x_0)=2m\left(1+\Delta u(x_0)\right)=2m\,\mathrm{e}^{2u(x_0)}\left(1-\mathrm{e}^{-f(x_0)}\right),
\end{equation}
where we have used~\eqref{eq:pdeconffactor2}. It follows that $f(x_0)\geqslant 0$ and hence $f\geqslant 0$ on all of $M^{\circ}$. This shows that $K_g\leqslant 0$. It order to prove $K_g<0$, we first remark that the function $f-1+\mathrm{e}^{-f}$ is non-negative on $M^{\circ}$. Consequently, ~\eqref{eq:deltaf} gives
$$
\Delta_g f\leqslant 2m f, 
$$
where $\Delta_g=\mathrm{e}^{-2u}\Delta$ denotes the Laplacian with respect to $g$. In particular, it follows that for every point $x \in M^{\circ}$ there exists a constant $c>0$, an $x$-neighbourhood $U_x$ and a flat metric $g_0$ on $U_x$ which lies in the conformal equivalence of $g$, so that
$$
\left(\Delta_{g_0}-c\right)f \leqslant 0
$$
on $U_x$. Therefore, by applying the strong maximum principle~\cite[Theorem~3.5]{GT} to the operator $\Delta_{g_0}-c$, it follows that if $f$ vanishes at some point in $U_x$, then it vanishes on all of $U_x$ and consequently on $M^{\circ}$. Since $A$ is holomorphic, its zeros are isolated and hence $M^{\circ}$ is dense in $M$. Since $K_g$ is continuous we conclude that if $K_g$ vanishes at some point on $M$, then it vanishes identically on $M$, but this possibility is excluded by the Gauss--Bonnet theorem. 
\end{proof}
\begin{Remark}\label{rmk:exsolpde}
From~\eqref{eq:pdeconffactor} we see that $u$ solves a PDE of the form $\Delta u=G(x,u)$ where
$$
G(x,u)=-1+\mathrm{e}^{2u}-(m-1)\mathrm{e}^{-2(m-1)u}\alpha(x). 
$$
Since $\alpha\geqslant 0$ we have $G(x,u)\leqslant -1+\mathrm{e}^{2u}$ and hence $G(x,u)<0$ for $u<0$. On the other hand, for $u> \sup_{x \in M} \frac{1}{2}\log(1+(m-1)\alpha(x))\geqslant 0$ we get
$$
G(x,u)>-1+\mathrm{e}^{2u}-(m-1)\alpha(x)>0.
$$
Since
$$
\frac{\partial G}{\partial u}(x,u)=2\alpha(x)(m-1)^2\mathrm{e}^{-2(m-1)u}+2\mathrm{e}^{2u}>0
$$
standard quasi-linear elliptic PDE methods (see for instance~\cite[Proposition 1.9]{taylorIII}) imply that~\eqref{eq:pdeconffactor} has a unique smooth solution $u$ for every smooth non-negative function $\alpha$. Consequently, for every holomorphic differential $A$ on $(M,[g])$ we obtain a unique solution $(g,A)$ to the coupled vortex equations $K_g=-1+(m-1)|A|^2_g$ and $\ov{\partial} A=0$.
\end{Remark}

We now show that $r^u>0$ (the proof that $r^{s}<0$ is similar). Set $h=r^u-V(a)/m$. Then $h$ satisfies
\[F(h)+h^{2}+hB-1=0,\]
where 
$$B:=\frac{(2-m)}{m}V(a).$$
Given $(x,v)\in SM$, consider for each $R>0$, the unique solution $h_{R}$ to the Riccati equation
along $\phi_{t}(x,v)$:
\[\dot{h}+h^2+hB-1=0\]
satisfying $h_{R}(-R)=\infty$. Using \eqref{eq:hopflimit} we derive
\begin{equation}
r^{u}(x,v)=\lim_{R\to\infty}h_{R}(0)+V(a)/m.
\label{eq:hopflimith}
\end{equation}
Let $c:=\max_{(x,v)}|B(x,v)|$ and $\ell:=\frac{\sqrt{c^{2}+4}-c}{2}$. 
If we let $f_{R}:=h_{R}-\ell$, then $f_{R}$ solves
\begin{equation}
\dot{f}+wf=q,
\label{eq:linear}
\end{equation}
where $w:=f_{R}+B+2\ell$ and $q:=-\ell^{2}-B\ell+1$. Observe that $q\geqslant 0$ by our definitions of $c$ and $\ell$.
We can solve the inhomogeneous linear equation \eqref{eq:linear} and use that $q\geqslant 0$ to derive
$f_{R}(t)\geqslant 0$ and thus $h_{R}(t)\geqslant \ell$. By taking limits, 
and using \eqref{eq:hopflimith}, we obtain
\[r^{u}(x,v)\geqslant \ell +V(a)/m.\]
By Lemma \ref{lemma:analogue_BH} we have $c<(m-2)/\sqrt{m-1}$ and $V(a)/m>-1/\sqrt{m-1}$. Thus
\[r^{u}\geqslant  \frac{\sqrt{c^{2}+4}-c}{2}-\frac{1}{\sqrt{m-1}}>0\]
as desired.
\end{proof}

\begin{Remark} As we have seen, Corollary \ref{cor:domsplitting} asserts that given a triple $(g,A,\theta)$ satisfying ~\eqref{eq:genwang} and~\eqref{eq:weakhol}, the associated thermostat flow has a dominated splitting.
When $\theta=0$, Theorem \ref{thm:upgradeanosov} tells us that we can do better and in fact the thermostat flow is Anosov. At the ``other end'', that is, when $A=0$, we also know by Proposition \ref{prop:basic} that the thermostat flow is also Anosov (in this case $\mathbb{K}=K_{g}-\delta_{g}\theta=-1$).
These two ``ends'' are Anosov for different reasons, connected with the discussion in Remark \ref{rem:upgrade}.
In the case $\theta=0$, as we have just seen, one uses that $r^{u}>0$, that is, the first case in \eqref{eq:alternative}. In the case $A=0$, we use the second case in (\ref{eq:alternative}). It is conceivable that the thermostat flow is always Anosov for any triple $(g,A,\theta)$ satisfying ~\eqref{eq:genwang} and~\eqref{eq:weakhol}, but at the time of writing it is not at all clear how to prove this.
It should be noted that for the special case of the geodesic flow it is well known that a dominated splitting must be Anosov. We can see this fairly quickly using quadratic forms as follows. Suppose $r^{u,s}:SM\to\R$ are two continuous functions such that $Xr^{u,s}+[r^{s,u}]^{2}+K_{g}=0$ and $r^{u}-r^{s}\neq 0$ everywhere.
Define
\[Q=2y\dot{y}-([r^{u}]^{2}+[r^{s}]^{2})y^2.\]
Then a calculation shows
\[\dot{Q}=(\dot{y}-r^{u}y)^{2}+(\dot{y}-r^{s}y)^{2}>0\]
unless $y=\dot{y}=0$. Hence by Proposition \ref{ppn:sufsplit} the geodesic flow is Anosov.
\end{Remark}

\subsection{Dissipation and volume} We will now prove the following result stated in the introduction. 

\begin{Theorem}Let $(g,A)$ be a pair satisfying the coupled vortex equations $\bar{\partial}A=0$ and
$K_g=-1+(m-1)|A|^{2}_{g}$.  Then the associated thermostat flow preserves an absolutely continuous measure if and only if $A$ vanishes identically.
\label{thm:novolume}
\end{Theorem}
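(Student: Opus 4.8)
The plan is to use Lemma~\ref{lied}, which gives $L_F\Theta = V(\lambda)\Theta$ with $\lambda = a$ in the holomorphic case (since $\theta=0$). An invariant volume form in the Lebesgue class must be of the form $e^{\varphi}\Theta$ for some smooth function $\varphi:SM\to\R$, and invariance $L_F(e^{\varphi}\Theta)=0$ translates into the cohomological equation $F\varphi = -V(a)$ (more precisely $F\varphi + V\lambda = 0$). So the existence of an absolutely continuous invariant measure is equivalent to the solvability of $F\varphi = -V(a)$ for a smooth, or at least suitable, function $\varphi$ on $SM$. One direction is trivial: if $A$ vanishes identically, then $\lambda=0$, so $V\lambda=0$, $F$ preserves $\Theta$ itself, and the Liouville measure is invariant. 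The content is the converse.

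For the converse I would argue by contradiction: assume $A$ is not identically zero but that $F\varphi = -V(a)$ admits a solution. The standard obstruction to solving a cohomological equation $F\varphi = G$ over a transitive Anosov flow is that the integral of $G$ against every flow-invariant measure — in particular against every measure supported on a closed orbit — must vanish. By Theorem~\ref{thm:upgradeanosov} the flow is Anosov, and by the remark following Theorem~\ref{thm:entp} it is transitive (indeed H\"older orbit equivalent to a geodesic flow). The cleanest route is to note that $V(a) = V\lambda$ is minus the divergence of $F$ with respect to the Liouville form $\Theta$; the existence of a smooth absolutely continuous invariant measure forces the average of this divergence over \emph{every} periodic orbit to vanish, equivalently the Lyapunov exponents / the sum of contraction and expansion rates to integrate to zero. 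I would therefore aim to show that $\int_{SM} V(a)\, d\mu_{\mathrm{Liouville}}$ or, better, the periodic-orbit obstruction is nonzero unless $A\equiv 0$.

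The key computational step is to produce a genuinely nonzero obstruction from $V(a)$. Here the natural tool is to integrate an identity over $SM$ using Lemma~\ref{lied} again, or to exploit the structure equations~(\ref{comm}). I expect the right move is to compute $\int_{SM} V(a)\,\Theta$ and related fibre integrals: since $a$ represents a differential $A$ of degree $m$ with $VVa = -m^2 a$, the function $V(a)$ has zero fibre-average, so a naive integral vanishes and one must instead examine a weighted or squared quantity. A cleaner strategy is to test invariance against the Liouville measure of the \emph{modified} cocycle: if $e^{\varphi}\Theta$ is invariant then $F\varphi=-V\lambda$, and integrating against the invariant measure the left side vanishes, so the obstruction is the nonvanishing of some positive quantity built from $|A|^2_g$. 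Using $K_g<0$ from Lemma~\ref{lemma:analogue_BH} together with $K_g = -1+(m-1)|A|^2_g$, the quantity $|A|^2_g = (K_g+1)/(m-1)$ is strictly positive on a dense set whenever $A\not\equiv 0$, and this positivity should be what forces the obstruction to be strictly negative (or positive), contradicting solvability.

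The main obstacle will be identifying precisely which flow-invariant quantity detects $A\neq 0$. The subtlety is that $V(a)$ itself averages to zero on fibres, so the obstruction cannot be read off from a single integral of $V(a)$; one must integrate $F\varphi=-V\lambda$ against the putative invariant measure $e^{\varphi}\Theta$ and then manipulate the resulting expression using the commutation relations~(\ref{comm}) and the coupled vortex equations to extract a term proportional to $\int |A|_g^2$. I would expect the argument to hinge on computing $\int_{SM} V(a)\, e^{\varphi}\,\Theta = 0$ and then, via integration by parts with $V$ (which preserves $\Theta$), converting this into a statement about $\int_{SM} a\, V(\varphi)\, e^{\varphi}\Theta$, and finally relating $V(\varphi)$ and $\varphi$ back to the dynamics so that the coupled vortex equation $K_g=-1+(m-1)|A|^2_g$ can be invoked to yield a contradiction unless $|A|^2_g\equiv 0$. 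Handling the regularity of $\varphi$ (a priori only the Anosov-theoretic regularity of an invariant density) while carrying out these fibrewise integrations by parts is the delicate point.
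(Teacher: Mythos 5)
Your setup matches the paper's: invariance of a smooth density $e^{\varphi}\Theta$ is equivalent, via Lemma~\ref{lied}, to the cohomological equation $F\varphi=-V(a)$, and the direction $A\equiv 0$ is immediate. Two things are missing, one of which is the heart of the proof. First, the regularity point you flag at the end is not optional and is not resolved by your sketch: an absolutely continuous invariant measure has, a priori, no smooth density. The paper disposes of this at the outset by invoking the smooth Liv\v{s}ic theorem of de la Llave--Marco--Moriy\'on \cite[Corollary 2.1]{LMM}, which for a $C^{\infty}$ Anosov flow (Theorem~\ref{thm:upgradeanosov}) upgrades any absolutely continuous invariant measure to a smooth invariant volume form; without this (or an equivalent substitute) your fibrewise integrations by parts have no footing.

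Second, and more seriously, your proposed mechanism for the converse does not produce an obstruction. The quantities you suggest computing are genuinely signless: from $F\varphi=-Va$ and invariance one does get $\int_{SM}V(a)\,e^{\varphi}\Theta=0$ and, integrating by parts in $V$, $\int_{SM}a\,V(\varphi)\,e^{\varphi}\Theta=0$, but these identities hold without contradiction for any $a$ and cannot detect $A\neq 0$; likewise, the periodic-orbit route would require exhibiting a closed orbit with nonzero average of $V(a)$, which is exactly as hard as the theorem itself. The paper's actual device is the $L^{2}$ Pestov-type identity of \cite[Equation (5)]{JP}: with $H_{c}=H+cV$,
\begin{equation*}
2\langle H_{c}u,VFu\rangle=\|Fu\|^{2}+\|H_{c}u\|^{2}-\langle Fc+c^{2}+K_g-H_{c}\lambda+\lambda^{2},(Vu)^{2}\rangle ,
\end{equation*}
where the decisive step is the gauge choice $c=V(a)/m$, for which the coupled vortex equation gives $Fc+c^{2}+K_g-H_{c}a+a^{2}=K_g+(1-m)|A|^{2}_{g}=-1$, turning the identity into a sum of squares $2\langle H_{c}u,VFu\rangle=\|Fu\|^{2}+\|H_{c}u\|^{2}+\|Vu\|^{2}$. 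Then, using $Fu=Va$, $VFu=-m^{2}a$, the holomorphicity relation $XVa=mHa$ (Lemma~\ref{lem:transnotthomasgabriel} with $\theta=0$), and that $X$, $H$, $V$ preserve $\Theta$, the left-hand side evaluates to $-2m\|Va\|^{2}\leqslant 0$, forcing $Va\equiv 0$ and hence $a\equiv 0$ because $VVa=-m^{2}a$. Note also that Lemma~\ref{lemma:analogue_BH} ($K_g<0$), which you lean on, plays no role in this theorem; it is the input for the Anosov upgrade, not for the dissipation statement. Without the identity of \cite{JP} and the specific choice $c=V(a)/m$ --- the one place where the vortex equation $K_g=-1+(m-1)|A|^{2}_{g}$ enters --- your outline has no route to a contradiction.
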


\begin{proof} Since the flow is of class $C^{\infty}$ and Anosov, an application of the smooth Liv\v sic theorem  \cite[Corollary 2.1]{LMM}
shows that $\phi_t$ preserves an absolutely continuous measure
if and only if $\phi_t$ preserves a smooth volume form. 

We write the volume form as $\mathrm{e}^{-u}\Theta$
for some real-valued function $u$ on $SM$. Thus, using~\eqref{lie1}, we obtain
$$
L_{F}\left(\mathrm{e}^{-u}\Theta\right)=-\mathrm{e}^{-u}F(u)\Theta+\mathrm{e}^{-u}V(a)\Theta=(-Fu+Va)\mathrm{e}^{-u}\Theta.
$$
Hence the claim follows by showing that if $u$ solves $Fu=Va$, then $a$ vanishes identically. In order to show this we use the following $L^2$ identity proved in \cite[Equation (5)]{JP} which is in turn an extension of an identity in \cite{SU} for geodesic flows. The identity holds for arbitrary thermostats $F=X+\lambda V$. If we let $H_{c}:=H+cV$ where $c:SM\to\re$ is any smooth function then
\begin{equation}
2\langle H_{c}u,VFu\rangle=\|Fu\|^{2}+\|H_{c}u\|^{2}-\langle Fc+c^{2}+K_g-H_{c}\lambda+\lambda^{2},(Vu)^{2}\rangle,
\label{eq:l2}
\end{equation}
where $u$ is any smooth function. All norms and inner products are $L^{2}$ with respect to the volume form $\Theta$.

In our case $\lambda=a$ and a calculation shows that if we pick $c=V(a)/m$, then
$$
Fc+c^{2}+K_g-H_{c}\lambda+\lambda^{2}=K_g+(1-m)|A|_{g}^{2}=-1,
$$ 
hence for this choice of $c$, \eqref{eq:l2} simplifies to
\begin{equation}
2\langle H_{c}u,VFu\rangle=\|Fu\|^{2}+\|H_{c}u\|^{2}+\|Vu\|^{2}.
\label{eq:l22}
\end{equation}
If $Fu=Va$, then $VFu=-m^{2}a$ and we compute using that $X$ and $H$ preserve $\Theta$ and that
$XVa-mHa=0$:
\begin{align*}
2\left\langle H_{c}u,VFu\right\rangle&=-2m^{2}\langle Hu,a\rangle-2m^{2}\langle cVu,a\rangle\\
&=2m^{2}\langle u,Ha\rangle -2m^{2}\langle cVu,a\rangle\\
&=-2m^{2}\langle Xu,V(a)/m\rangle-2m^{2}\langle cVu,a\rangle\\
&=-2m\|Va\|^{2},
\end{align*}
where the last equation is obtained using that $Xu=Va-aVu$ and $c=V(a)/m$. Inserting this back into \eqref{eq:l22}, we see that the equality obtained can only hold if $Va$ and hence $a$ vanishes identically.
\end{proof}

\section{The cases $m=2$ and $m=3$} 
\label{section:2-3}In this section we consider the special cases of $m=2,3$ and their peculiarities. These flows have appeared in different contexts and for different reasons and in this section we explain these features.

\subsection{The case $m=2$} Consider a pair $(g,A)$ where $A$ is a quadratic differential with $\bar{\partial}A=0$ and $K_g=-1+|A|^{2}_{g}$. By Theorem \ref{thm:upgradeanosov}, the associated thermostat flow is Anosov.
These flows have the distinctive feature that their weak bundles are of class $C^{\infty}$. Indeed for this case $p=V(a)/2$, $\kappa_{p}=-1$ and equation \eqref{eq:ric2} reduces to
\[Fh+h^{2}-1=0.\]
From this we clearly see that $r^{u,s}=\pm 1+V(a)/2$ and hence the weak bundles
\[\mathbb{R}F\oplus\mathbb{R}(H+r^{s,u}V)\]
are smooth. This class of thermostats flows was first considered in \cite{P07}, where the coupled vortex equations for $m=2$ were derived assuming that the weak foliations were smooth. Theorem 4.6 in \cite{Ghy2} asserts that
a smooth Anosov flow on a closed 3-manifold with weak stable and unstable foliations of class $C^{1,1}$,
is smoothly orbit equivalent to a suspension or to a {\it quasi-fuchsian flow} as described in \cite[Th\'eor\`eme B]{Ghy1}. (In our case, since we are working with circles bundles
the latter alternative holds.) A quasi-fuchsian flow $\psi$ depends on a pair of points $([g_1],[g_2])$
in Teichm\"uller space, has smooth weak stable foliation $C^{\infty}$-conjugate to the
weak stable foliation of the constant curvature metric $g_1$ and smooth weak unstable foliation
$C^{\infty}$-conjugate to the weak unstable foliation of the constant curvature metric $g_2$.
Moreover, $\psi$ preserves a volume form if and only if $[g_1]=[g_2]$.
The analogous result on the thermostat side is provided by
Theorem \ref{thm:novolume} which asserts that the thermostat flow preserves a volume form iff $A=0$.
It is an interesting question (first raised in \cite{P07}) to decide if the thermostat flows originating from the coupled vortex equations $\bar{\partial}A=0$, $K_g=-1+|A|^{2}_{g}$ describe all possible quasi-fuchsian flows $\psi$.

\subsection{The case $m=3$}
Let now $(g,A,\theta)$ be a triple on $M$ satisfying~\eqref{eq:genwang} and~\eqref{eq:weakhol} with $A$ being a cubic differential. The connection form of the Levi-Civita connection on the tangent bundle $TM$
is
$$
\begin{pmatrix} 0 & -\psi \\ \psi & 0\end{pmatrix}.
$$
We define a $1$-form on $SM$ with values in $\mathfrak{gl}(2,\R)$
\begin{multline*}
\Upsilon=(\Upsilon^i_j)=\begin{pmatrix} 0 & -\psi \\ \psi & 0 \end{pmatrix}\\
+\begin{pmatrix} (V(a)/3-\theta)\omega_1-(a+V(\theta))\omega_2 & -(V(\theta)+a)\omega_1+(\theta-V(a)/3)\omega_2\\
(V(\theta)-a)\omega_1-(\theta+V(a)/3)\omega_2 & -(\theta+V(a)/3)\omega_1+(a-V(\theta))\omega_2
\end{pmatrix}.
\end{multline*}
It is a consequence of the equivariance properties
$$
VVa=-9 a, \quad VV\theta=-\theta, \quad L_{V}\omega_1=\omega_2, \quad \text{and}\quad L_{V}\omega_2=-\omega_1
$$
that the $1$-form $\Upsilon$ is the connection $1$-form of a unique (torsion-free) connection $\nabla$ on the tangent bundle $TM$. Moreover, since the interior product $i_F\Upsilon^2_1$ vanishes identically for $\lambda=a-V\theta$, it follows that the geodesics of the connection $\nabla$ can be reparametrised to agree with the projections to $M$ of the orbits of the thermostat flow defined by $\lambda$, see~\cite[Lemma 3.1]{MetPat2} for details. Moreover, if $\theta$ is closed the connection $\nabla$ admits an interpretation as a Lagrangian minimal surface, see~\cite{Met}. If $A$ is holomorphic so that $\theta$ vanishes identically, then the connection $\nabla$ defines a properly convex projective structure on $M$, see the work of Labourie~\cite{Lab} and~\cite{Met2,Met}. This means that the universal cover $\Omega$ of $M$ is a properly convex open subset of the real projective plane $\mathbb{RP}^2$ for which there exists a discrete group $\Gamma$ of projective transformations which acts cocompactly on $\Omega$ and so that $M=\Omega/\Gamma$. Thus, $(\Omega,\Gamma)$ is a divisible convex set. Moreover, the segments of the projective lines $\mathbb{RP}^1$ contained in $\Omega$ project to $M$ to agree with the (unparametrised) geodesics of $\nabla$. The universal cover $\Omega$ being a convex set, it is equipped with the Hilbert metric. The geodesic flow of the Hilbert metric descends to $\mathbb{S}M$ and by a result of Benoist~\cite{BConvDivI}, is Anosov if and only if $\Omega$ is strictly convex. In~\cite{BConvDivI}, it is also shown that a divisible convex set is strictly convex if and only if the group dividing it is word-hyperbolic. Since the fundamental group of a closed surface of negative Euler characteristic is word-hyperbolic, it thus follows from known results that the thermostat flow associated to a holomorphic cubic differential is a reparametrisation of an Anosov flow. However, since the Anosov property is invariant under reparametrisation of the flow, we conclude that the thermostat flow associated to a holomorphic cubic differential is Anosov, which is the statement of our Theorem~\ref{thm:anosovholdif} for the special case $m=3$.  

\section{Regularity of weak foliations} As we previously mentioned, the case of $m=2$ has the distinctive feature of having weak bundles of class $C^{\infty}$. It is natural to ask what happens for $m\geq 3$.
One approach to this question would be to compute the Godbillon--Vey invariant  following \cite{P07}.
Unfortunately for $m\geq 3$ this calculation does not yield information conducive to an answer.
However, for the case $m$ odd, we can use reversibility of the flow combined with Theorem \ref{thm:novolume} to derive:

\begin{Theorem}Suppose an Anosov thermostat given by the coupled vortex equations has a weak foliation of class $C^{2}$ and $m$ is odd. Then $A$ vanishes identically.
\label{thm:regweak}
\end{Theorem}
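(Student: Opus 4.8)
The plan is to prove the contrapositive route suggested by the surrounding discussion: for odd $m$ the flow is reversible, this upgrades the hypothesis ``one weak foliation is $C^2$'' to ``both weak foliations are $C^2$'', and the resulting rigidity produces an absolutely continuous invariant measure, which by Theorem~\ref{thm:novolume} forces $A\equiv 0$.

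First I would establish reversibility. Consider the fibre-flip $\sigma(x,v)=(x,-v)$, the time-$\pi$ map of the flow of $V$. A direct check on the geodesic flow gives $\sigma\circ\phi^0_t=\phi^0_{-t}\circ\sigma$, whence $\sigma_*X=-X$; since $\sigma$ commutes with the $\mathrm{SO}(2)$-action, $\sigma_*V=V$, and then $\sigma_*H=\sigma_*[V,X]=[V,-X]=-H$. Because $a$ represents a differential of degree $m$ we have $VVa=-m^2a$, so along each fibre $a$ is a combination of $\cos m\vartheta$ and $\sin m\vartheta$; rotation by $\pi$ multiplies it by $(-1)^m$, i.e. $a\circ\sigma=(-1)^m a$. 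For odd $m$ this gives
\[
\sigma_*F=\sigma_*X+(a\circ\sigma)\,\sigma_*V=-X-aV=-F,
\]
so $\sigma\circ\phi_t=\phi_{-t}\circ\sigma$: the flow is smoothly reversible, and the smooth involution $\sigma$ interchanges the weak stable and the weak unstable foliations. (At the level of the Riccati solutions this reads $r^{u}\circ\sigma=-r^{s}$.)

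Next I would exploit reversibility to symmetrise the regularity hypothesis. As $\sigma$ is a diffeomorphism carrying one weak foliation to the other, if one is $C^2$ then so is the other; hence both are $C^2$, a fortiori $C^{1,1}$. Since $SM$ is a nontrivial circle bundle over a surface with $\chi(M)<0$, it is not a mapping torus, so Ghys' classification~\cite[Theorem~4.6]{Ghy2} (cf.\ the discussion preceding Theorem~\ref{thm:novolume}) shows that $\phi$ is smoothly orbit equivalent to a quasi-Fuchsian flow $\psi=\psi_{([g_1],[g_2])}$ as in~\cite[Th\'eor\`eme~B]{Ghy1}. I would then rule out the dissipative models: a time-preserving smooth orbit equivalence carries the weak stable and weak unstable foliations to their counterparts separately, hence preserves the ordered pair $([g_1],[g_2])$, these conformal structures being exactly the invariants attached to the two foliations. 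Time reversal sends $\psi_{([g_1],[g_2])}$ to $\psi_{([g_2],[g_1])}$, so reversibility (which makes $\phi_{-t}$ conjugate to $\phi_t$) forces $\psi_{([g_1],[g_2])}$ and $\psi_{([g_2],[g_1])}$ to be orbit equivalent, hence $[g_1]=[g_2]$. By the cited properties, such a quasi-Fuchsian flow preserves a smooth volume $\nu$.

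It remains to transfer invariance across the orbit equivalence $h$. Writing $h_*F=\rho\,G$ with $\rho>0$ smooth and $G$ the volume-preserving generator of $\psi$, Cartan's formula gives $L_{\rho G}(\rho^{-1}\nu)=\operatorname{d} i_{\rho G}(\rho^{-1}\nu)=\operatorname{d} i_{G}\nu=L_{G}\nu=0$, so the reparametrised field $\rho G$ preserves the absolutely continuous measure $\rho^{-1}\nu$; pulling back by $h$ yields a smooth invariant measure for $\phi$. Theorem~\ref{thm:novolume} then forces $A\equiv 0$. The main obstacle is the step excluding $[g_1]\neq[g_2]$: this is precisely where reversibility is indispensable, and it is the reason the argument is limited to odd $m$ (for which $\sigma_*F=-F$). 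Note that the purely infinitesimal relation $Va=(r^{u}+r^{s})+F\log(r^{u}-r^{s})$ does not suffice on its own, since ``$r^{u}+r^{s}$ is an $F$-coboundary'' is equivalent to the conclusion itself; breaking this circularity requires the external input of $C^{1,1}$ rigidity together with reversibility.
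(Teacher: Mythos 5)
Your proposal follows the paper's proof almost step for step: the flip symmetry $\sigma_*F=-F$ for odd $m$ (so that $\sigma$ exchanges the weak foliations and both become $C^2$), Ghys' classification~\cite[Theorem 4.6]{Ghy2} giving a smooth orbit equivalence to a quasi-Fuchsian flow $\psi_{([g_1],[g_2])}$ (with the suspension alternative excluded by the circle-bundle topology), the conclusion $[g_1]=[g_2]$ so that $\psi$ preserves a smooth volume, the transfer of the invariant volume across the orbit equivalence, and finally Theorem~\ref{thm:novolume}. Your supplementary computations -- the fibrewise identity $a\circ\sigma=(-1)^m a$ giving $\sigma_*F=-F$, and the verification $L_{\rho G}\left(\rho^{-1}\nu\right)=0$ used to pull the volume back through the orbit equivalence -- are correct and fill in details the paper leaves implicit.

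The one step where your wording is not literally correct is the exclusion of $[g_1]\neq[g_2]$. The claim that ``a time-preserving smooth orbit equivalence \ldots preserves the ordered pair $([g_1],[g_2])$'' fails in general: the conformal structures attached to the two weak foliations are invariants only up to the diagonal action of the mapping class group, so from the orbit equivalence of $\psi_{([g_1],[g_2])}$ with its time reversal $\psi_{([g_2],[g_1])}$ you may only conclude that some mapping class $\gamma$ satisfies $\gamma[g_1]=[g_2]$ and $\gamma[g_2]=[g_1]$ -- which still permits $[g_1]\neq [g_2]$ (take an involutive mapping class swapping two distinct points of Teichm\"uller space). This is precisely why the paper's proof hinges on the observation that $\sigma$ is \emph{isotopic to the identity}. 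You have this ingredient in hand -- you describe $\sigma$ as the time-$\pi$ map of the flow of $V$, which exhibits the isotopy -- but you do not invoke it at the decisive moment. The patch is immediate: if $h$ is the orbit equivalence from $\phi$ to $\psi$, then $h\circ\sigma\circ h^{-1}$ is an orbit equivalence between $\psi_{([g_1],[g_2])}$ and $\psi_{([g_2],[g_1])}$ which is isotopic to the identity (conjugation preserves the isotopy class), hence acts trivially on Teichm\"uller space and forces equality of the ordered pairs, i.e.\ $[g_1]=[g_2]$. With that clause made explicit, your argument coincides with the paper's.
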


\begin{proof} When $m$ is odd there is an important additional symmetry in the flow: the flip $\sigma$ given by
$(x,v)\mapsto (x,-v)$. We note that this map is isotopic to the identity. If $\phi$ denotes the thermostat flow then,
$\sigma\circ\phi_{t}=\phi_{-t}\circ\sigma$. This relation easily implies that $\sigma$ maps the weak stable foliation to the unstable one. Hence, if one of them is of class $C^{2}$, the other one is also of class $C^2$.

As we have already mentioned, Theorem 4.6 in \cite{Ghy2} asserts that a smooth Anosov flow on a closed 3-manifold with weak stable and unstable foliations of class $C^{2}$,
is smoothly orbit equivalent to a quasi-fuchsian flow $\psi$ that depends on a pair of points $([g_1],[g_2])$
in Teichm\"uller space. The flow $\psi$ has smooth weak stable foliation $C^{\infty}$-conjugate to the
weak stable foliation of the constant curvature metric $g_1$ and smooth weak unstable foliation
$C^{\infty}$-conjugate to the weak unstable foliation of the constant curvature metric $g_2$. But since $\sigma$ is isotopic to the identity we must have
$[g_{1}]=[g_{2}]$ and $\psi$ is an ordinary geodesic flow preserving a volume form.
Thus our thermostat flow preserves a volume form and by Theorem \ref{thm:novolume} we must have $A=0$.
\end{proof}
\begin{Remark} It is instructive to discuss Theorem \ref{thm:regweak} in the light of the remarks in Section \ref{section:2-3} for $m=3$. As pointed out, in this case, the thermostat flow is a $C^{\infty}$ parametrisation of the geodesic foliation of a Hilbert metric. Benoist observes in \cite{BConvDivI} that the regularity of the weak foliations of the Hilbert geodesic flow coincides with the regularity of the boundary. Hence if the boundary of the strictly convex domain defining the Hilbert metric is $C^2$, then the associated thermostat flow also has $C^2$ weak foliations and therefore $A=0$. This implies that the convex domain is an ellipsoid, thus recovering a result of Benz\'ecri~\cite{Benz} for the case of 2-dimensional domains (note however, that the proof in \cite{Benz} is more direct and straightforward).
\end{Remark}
\section{The path geometry defined by a thermostat}

A thermostat naturally defines a path geometry and in this final section we show that the path geometry associated to the thermostat coming from a holomorphic differential $A$ of degree $m\geqslant 2$ is flat if and only if $A$ vanishes identically or $m=3$. The former case corresponds to the paths being the geodesics of a hyperbolic metric and the latter case to the paths being the geodesics of a convex projective structure. We first recall some elementary facts about path geometries while referring the reader to~\cite{BRY} for further details.  

An~\textit{(oriented) path geometry} on an oriented surface $M$ is given by an oriented line bundle $L$ on the projective circle bundle $\mathbb{S}M:=\left(TM\setminus\{0\}\right)/\R^+$ having the property that $L$ together with the vertical bundle of the projection map $\nu : \mathbb{S}M\to M$ spans the contact distribution of $\mathbb{S}M$. The~\textit{paths} of $L$ are the projections of its integral curves to $M$. Note that the orientation of $L$ naturally equips its paths with an orientation. 
\begin{Example}
Taking $M$ to be the oriented $2$-sphere $S^2$, we obtain a canonical path geometry $L_0$ whose paths are the great circles. In this case $\mathbb{S}S^2\simeq \mathrm{SO}(3)$ and $L_0$ is the line bundle defined by $\omega_2=\psi=0$, where we write the Maurer--Cartan form $\omega_{\mathrm{SO}(3)}$  of $\mathrm{SO}(3)$ as
$$
\omega_{\mathrm{SO}(3)}=\begin{pmatrix} 0 & -\omega_1 & -\omega_2 \\ \omega_1 & 0 & -\psi \\ \omega_2 & \psi & 0\end{pmatrix}
$$      
for left-invariant $1$-forms $\omega_1,\omega_2,\psi$ on $\mathrm{SO}(3)$. Moreover, we orient $S^2$ such that an orientation compatible volume form pulls back to $\mathrm{SO}(3)$ to become a positive multiple of $\omega_1\wedge\omega_2$ and orient $L_0$ in such a way that $\omega_1$ is positive on positive vectors of $L_0$. 
\end{Example}

\begin{Definition}
A path geometry $L$ on $M$ is called~\textit{flat}, if for every point $p \in M$, there exists a neighbourhood $U_p$ and an orientation preserving diffeomorphism $f : U_p \to V$ onto some open subset $V\subset S^2$, which maps the positively oriented paths contained in $U_p$ onto positively oriented great circles.
\end{Definition} 

Let now $F=X+\lambda V$ be a thermostat on the unit tangent bundle $SM$ of a oriented Riemannian $2$-manifold $(M,g)$. We henceforth identify $SM \simeq \mathbb{S}M$ in the obvious way. In doing so, we obtain a path geometry by defining $L:=\R F$ and by declaring vectors in $L$ to be positive if they are positive multiples of $F$.

Clearly, if a path geometry is flat, then it must have the property that its paths agree with the geodesics of some projective structure. In~\cite[Proposition 3.4]{MetPat2} it is shown that the path geometry defined by a thermostat $X+\lambda V$ shares its paths with the geodesics of some projective structure if and only if 
\begin{equation}\label{eq:curvcartgeom}
0=\frac{3}{2}\lambda+\frac{5}{3}VV\lambda+\frac{1}{6}VVVV\lambda.
\end{equation}
Using this fact we immediately obtain:
 
\begin{Theorem}
Let $(g,A)$ be a pair satisfying the coupled vortex equations $\bar{\partial}A=0$ and $K_g=-1+(m-1)|A|^{2}_{g}$. Then the path geometry defined by the thermostat associated to $(g,A)$ is flat if and only if $m=3$ or $A$ vanishes identically.
\end{Theorem}

\begin{proof}
Suppose the path geometry associated to $(g,A)$ is flat. Recall that for our choice $\lambda=a$ we have $VVa=-m^2 a$, hence~\eqref{eq:curvcartgeom} gives
$$
0=\left(\frac{1}{6}m^4-\frac{5}{3}m^2+\frac{3}{2}\right)a=\frac{1}{6}(m-1)(m+1)(m-3)(m+3)a.
$$
Consequently, $a$ and hence $A$ must vanish identically or $m=3$. 

Conversely, assume $A$ is a cubic differential satisfying $\ov{\partial} A=0$ and $K_g=-1+2|A|^2_g$. The path geometry associated to $(g,A)$ defines a properly convex projective structure on the oriented surface $M$. An oriented properly convex projective surface is an example of a surface carrying a $(G,X)$-structure where $X=\mathbb{S}^2$ is the oriented projective $2$-sphere and $G=\mathrm{SL}(3,\R$) its group of projective transformations, cf.~\cite{KimPap}. In particular, it follows that the path geometry associated to $(g,A)$ is flat.  
\end{proof}

\end{document}